\newcommand{\pic}[2]{\raisebox{-.5\height}
{\includegraphics[scale=#2]{#1}}}
\def\unknot{\pic{Figuras/unknot}{.025}}
\def\cruce{\pic{Figuras/cruce}{.055}}
\def\cupcap{\pic{Figuras/cupcap}{.025}}
\def\line{\pic{Figuras/line}{.025}}
\def\negative{\pic{Figuras/negative}{.025}}
\def\positive{\pic{Figuras/positive}{.025}}
\def\parentesis{\pic{Figuras/parentesis}{.025}}
\def\pretzel{\pic{Figuras/pretzel}{.150}}
\def\politopo{\pic{Figuras/politopo}{.900}}
\def\plano{\pic{Figuras/plano}{.580}}
\def\tresuno{\pic{Figuras/tresuno}{.350}}
\def\unomenosuno{\pic{Figuras/unomenosuno}{.350}}
\def\OVUpretzelUno{\pic{Figuras/OVUpretzelUno}{.200}}
\def\OVUpretzelDos{\pic{Figuras/OVUpretzelDos}{.200}}
\def\OVUpretzelTres{\pic{Figuras/OVUpretzelTres}{.200}}
\def\OVUpretzelCuatro{\pic{Figuras/OVUpretzelCuatro}{.200}}
\def\OVUpretzelCinco{\pic{Figuras/OVUpretzelCinco}{.200}}
\def\OVUpretzelSeis{\pic{Figuras/OVUpretzelSeis}{.200}}
\def\OVUpretzelSiete{\pic{Figuras/OVUpretzelSiete}{.200}}
\newtheorem{theorem}{Theorem}
\newtheorem{proposition}[theorem]{Proposition}
\newtheorem{corollary}[theorem]{Corollary}
\newtheorem{lemma}[theorem]{Lemma}
\newtheorem{definition}[theorem]{Definition}
\newtheorem{remark}[theorem]{Remark}
\newtheorem{notation}[theorem]{Notation}
\renewenvironment{proof}[1][Proof]{\textit{#1.} }
{\hfill \rule{0.5em}{0.5em}}
\newcommand{\tn}{\textnormal}
\begin{document}

\title{Pretzel knots up to nine crossings}

\author{R. D\'{\i}az and P. M. G. Manch\'on
\footnote{The first author is partially supported by Project MTM2017-89420-P. The second author is partially supported by MEC-FEDER grant MTM2016-76453-C2-1-P.}}
\maketitle

\begin{center}
{\it \begin{tabular}{c} The first author dedicates this paper to her father, \\ who helped her with showing that the knot $7_7$ is pretzel.\end{tabular}}
\end{center}

\begin{abstract}
There are infinitely many pretzel links with the same Alexander polynomial (actually with trivial Alexander polynomial). By contrast, in this note we revisit the Jones polynomial of pretzel links and prove that, given a natural number $S$, there is only a finite number of pretzel links whose Jones polynomials have span $S$.

More concretely, we provide an algorithm useful for deciding whether or not a given knot is pretzel. As an application we identify all the pretzel knots up to nine crossings, proving in particular that $8_{12}$ is the {\it first} non-pretzel knot.
\end{abstract}

\textbf{Keywords:} \emph{Pretzel link, Kauffman bracket, Jones polynomial, span.}

\textbf{MSC Class:} \emph{57M25, 57M27.}

\section{Introduction} \label{SectionIntroduction}

The original motivation of this paper was to complete some tables appearing in some books (and knot atlas)  deciding when a knot is or not of type pretzel (see for example \cite{Peter}).  
In a previous note \cite{Pedro} by the second author, it was given a closed formula for the Kauffman bracket of any pretzel diagram $P=P(a_1, \dots, a_n)$ (a recurrence formula was given in \cite{Landvoy}), and based on this formula, the span of the Jones polynomial of the pretzel link represented by $P$ in terms of its entries $a_1, \dots, a_n$ (see Theorem~\ref{TheoremMain} and Theorem~\ref{TheoremSpan}). In this paper, which can be seen as a natural continuation of \cite{Pedro}, we prove the following result (Theorem \ref{TheoremFewCases}): given an integer $S$, there is a finite number of pretzel links whose Jones polynomials have span $S$. Moreover, the complete list of the pretzel links with span $S$ can be provided by using Theorem \ref{TheoremSpan}. 

A by-product of our work is to provide the complete list of pretzel knots up to nine crossings (Theorems\ref{TheoremUpToEightCrossings} and \ref{TheoremNineCrossings}). In particular, we discover that $8_{12}$ is the first non-pretzel knot in tables, and $9_{42}$ is the first non alternating and non pretzel knot. 

The list of items in the statement of Theorem \ref{TheoremSpan} is long and a bit cumbersome, and in this paper we fix a small exception that was missing in \cite{Pedro}. For this reason and by completeness, we rewrite in an appendix the proof of Theorem~ \ref{TheoremSpan}, taking special care of item {\it 5}.

Pretzel links (and rational links) are special types of Montesinos links, and there have been some important works trying to classify Montesinos links up to mutation and $5$-move (see for example \cite{DIP} or \cite{Stoimenow}). Sub-products of these works have been certain (partial) tests for trying to decide whether or not a link is of Montesinos type, and the same for pretzel links. For example, if the test in \cite{Stoimenow} says {\it no}, the link is not a Montesinos link; if it says {\it yes}, the link {\it could be} of type Montesinos, and if so, would be obtained by applying mutations and $5$-moves to an specific (representative of an equivalence class of) Montesinos link. Our contribution to this subject goes in another direction. When we face an arbitrary link $L$, our algorithm finds a (usually large but) {\it finite} list of pretzel links such that, if $L$ is pretzel, must be one of the list, the key points being Theorems \ref{TheoremFewCases} and \ref{TheoremSpan}. 

The paper is organized as follows: in Section \ref{SectionAgain} we recall the basic definitions and the closed formula for the Kauffman bracket of pretzel links. In Section \ref{SectionSpanJones} we state Theorem \ref{TheoremSpan} and discuss its items with the help of some pictures. In Section \ref{SectionFewCases} we prove Theorem \ref{TheoremFewCases} and precise an upper bound for the number of pretzel links with a given span. Section \ref{SectionList} uses carefully the previous theorems for deciding which knots up to nine crossings are pretzel. The revisited proof of Theorem \ref{TheoremSpan} is left to the appendix. 

\section{Kauffman bracket of pretzel links} \label{SectionAgain}
Given integers $a_1,...,a_n$, denote by $P(a_{1},...,a_{n})$ the pretzel link diagram shown in Figure \ref{FigurePretzel}. Here $a_i$ indicates $|a_i|$ crossings, with signs $a_i/|a_i|$ if $a_i\neq 0$.
\begin{figure}[ht!]
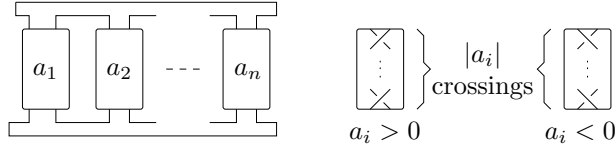
 
\labellist
 \pinlabel {$a_1$} at 100 175
 \pinlabel {$a_2$} at 300 175
 \pinlabel {$a_n$} at 650 175
 \pinlabel {$a_i>0$} at 1020 10
 \pinlabel {$a_i<0$} at 1550 10
 \pinlabel {\begin{tabular}{c}$|a_i|$\\ crossings\end{tabular}} at 1285 175
\endlabellist
\begin{center}
\pretzel
\end{center}
\caption{Pretzel link diagram $P(a_{1},...,a_{n})$.}
\label{FigurePretzel}
\end{figure}

A pretzel link is a link that has a pretzel diagram. 

For a link diagram $D$ we denote by $\langle D\rangle $ its Kauffman bracket with normalization $\langle \put(0,3){\unknot} \hspace{0.25cm} \rangle =\delta =-A^{-2}-A^{2}$ (see \cite{Lickorish}). Recall that $\langle D\rangle $ is a regular isotopy invariant of diagrams, defined by the following additional relations:
$$
\tn{(i)}\, \, \, 
\langle \put(0,3){\cruce} \hspace{0.25cm} \rangle 
= A\langle \put(0,3){\cupcap} \hspace{0.25cm}
\rangle +A^{-1}\langle \put(0,3){\parentesis} \hspace{0.25cm} \rangle, 
\qquad
\tn{(ii)}\, \, \,
\langle D \sqcup \put(0,3){\unknot} \hspace{0.25cm}
 \rangle =\delta \langle D \rangle.
 $$

Here $\put(0,3){\unknot } \hspace{0.25cm}$ is the diagram of the unknot with no crossings. In (i) the formula refers to three link diagrams that are exactly the same except near a point where they differ in the way indicated. In (ii) $D \sqcup \put(0,3){\unknot } \hspace{0.25cm}$ is a diagram consisting of the diagram $D$ together with an extra closed curve $\put(0,3){\unknot } \hspace{0.25cm}$ that contains no crossings at all, not with itself nor with $D$. From these relations we can deduce the effect on $\langle D\rangle $ of a type I Reidemeister move on $D$:
$$
\tn{(iii)} \, \, \,
\langle \put(0,3){\positive } \hspace{0.25cm} \rangle 
= -A^3\langle \put(0,3){\line } \hspace{0.25cm} \rangle,
\qquad
\tn{(iii')}\, \, \, 
\langle \put(0,3){\negative } \hspace{0.25cm} \rangle
=-A^{-3}\langle \put(0,3){\line } \hspace{0.25cm} \rangle.
$$

\begin{theorem} \label{TheoremMain} {\rm \cite{Pedro}} The Kauffman bracket of the
pretzel link diagram $P(a_{1},...,a_{n})$ is given by the formula
\begin{equation*}
\langle P(a_{1},\dots ,a_{n})\rangle =\prod_{i=1}^{n}(A^{a_{i}}\delta
+[a_{i}])+(\delta ^{2}-1)\prod_{i=1}^{n}[a_{i}].
\end{equation*}
\end{theorem}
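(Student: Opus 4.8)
The plan is to reduce everything to a single twist region and then run a state sum over the $2^{n}$ ways of smoothing the $n$ regions. First I would expand one twist region in the Kauffman skein module of the $2$--strand tangle, whose basis is the identity tangle $\mathbf 1$ (two parallel strands) and the cap--cup tangle $\mathbf U$. Applying relation (i) to a single crossing and composing tangles along the vertical axis, using $\mathbf 1\cdot\mathbf 1=\mathbf 1$, $\mathbf 1\cdot\mathbf U=\mathbf U\cdot\mathbf 1=\mathbf U$ and $\mathbf U\cdot\mathbf U=\delta\,\mathbf U$ (the middle circle contributing a factor $\delta$), a straightforward induction on the number of crossings yields, with the handedness convention of Figure~\ref{FigurePretzel},
\[
\langle T_a\rangle = A^{a}\,\mathbf 1 + [a]\,\mathbf U ,
\]
where the coefficient of $\mathbf 1$ collapses to the pure power $A^{a}$ thanks to the identity $A^{-1}\delta+A=-A^{-3}$, and the coefficient of $\mathbf U$ is exactly the quantity $[a]$ of \cite{Pedro}. (One checks the expansion is consistent with $[0]=0$ and $[1]=A^{-1}$, and the same induction covers $a<0$.) This collapse identity is what makes the whole scheme close up cleanly.

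Next, by multilinearity of the bracket I would substitute this decomposition simultaneously in all $n$ regions and expand, obtaining a sum indexed by subsets $S\subseteq\{1,\dots,n\}$, where $S$ records the regions smoothed as $\mathbf 1$ and its complement those smoothed as $\mathbf U$:
\[
\langle P(a_1,\dots,a_n)\rangle = \sum_{S\subseteq\{1,\dots,n\}}\Big(\prod_{i\in S}A^{a_i}\Big)\Big(\prod_{i\notin S}[a_i]\Big)\,\delta^{\,L(S)} ,
\]
where $L(S)$ is the number of disjoint circles in the fully smoothed diagram determined by $S$.

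The crux is the combinatorial lemma computing $L(S)$. Tracing strands through the pretzel closure, an $\mathbf 1$--region lets both strands pass straight through while a $\mathbf U$--region caps off the top and the bottom; following the connecting arcs carefully shows that $L(S)=|S|$ whenever $S\neq\emptyset$, whereas the all--$\mathbf U$ state $S=\emptyset$ produces exactly \emph{two} circles, one running along all the top caps and one along all the bottom cups, so $L(\emptyset)=2$. This discontinuity at $S=\emptyset$, the jump from the value $0$ predicted by the pattern $L(S)=|S|$ to the actual value $2$, is the one delicate point and the main obstacle; it is precisely what the correction term of the formula will absorb.

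Finally I would assemble the sum. For $S\neq\emptyset$ the summand equals $\prod_{i\in S}(A^{a_i}\delta)\prod_{i\notin S}[a_i]$, while the $S=\emptyset$ summand equals $\delta^{2}\prod_i[a_i]$. Writing $\delta^{2}\prod_i[a_i]=\prod_i[a_i]+(\delta^{2}-1)\prod_i[a_i]$ and folding the first piece back in as the missing $S=\emptyset$ term of $\prod_{i}(A^{a_i}\delta+[a_i])=\sum_{S}\prod_{i\in S}(A^{a_i}\delta)\prod_{i\notin S}[a_i]$, the whole expression telescopes into
\[
\prod_{i=1}^{n}(A^{a_i}\delta+[a_i])+(\delta^{2}-1)\prod_{i=1}^{n}[a_i],
\]
which is the claimed formula. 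Beyond routine bookkeeping, the only real inputs are the single--region expansion and the loop--count lemma.
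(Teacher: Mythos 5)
Your proof is correct. The paper does not actually prove Theorem~\ref{TheoremMain} itself --- it is quoted from \cite{Pedro}, and the only trace of the argument left in the text is Lemma~\ref{lemma 1}, which is exactly your single-region expansion $\langle T_a\rangle = A^a\,\mathbf 1 + [a]\,\mathbf U$ written in the form ``replace the $i$-th region by $0$'' (the tangle $\mathbf 1$) versus ``delete the $i$-th region'' (the tangle $\mathbf U$), and which feeds an induction on the number of entries. Your version unrolls that induction into a single state sum over subsets $S$, so the mathematical content is the same; what your packaging makes explicit, and what stays hidden in the inductive version, is the loop-count lemma $L(S)=|S|$ for $S\neq\emptyset$ versus $L(\emptyset)=2$, whose anomaly at $S=\emptyset$ is precisely the origin of the correction term $(\delta^2-1)\prod_i[a_i]$. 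That lemma is stated and used correctly (the $\mathbf U$-columns contract away, and $k\geq 1$ surviving vertical columns closed up pretzel-wise give exactly $k$ circles, one per cyclically adjacent pair). One small misattribution worth fixing: the identity $A^{-1}\delta+A=-A^{-3}$ is not what makes the $\mathbf 1$-coefficient collapse to $A^a$ --- that is automatic, since $\mathbf 1$ is the identity for stacking and only the all-$\mathbf 1$ state of the twist region contributes to it. That identity (equivalently Lemma~\ref{lemma 0}, via $A^{a-1}+\delta[a-1]=(-A^{-3})^{a-1}$) is instead what you need to check that the $\mathbf U$-coefficient satisfies the recurrence defining $[a]$. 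This is a slip of exposition, not a gap.
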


Here $[0] = 0$, and for any integer $a\neq 0$, 
$$
[a] 
= A^{-2\frac{a}{|a|}-3a} \sum_{i=1}^{|a|}(-1)^{a+i}A^{4i\frac{a}{|a|}}.
$$ 
Note that $[a]$ is a Laurent polynomial in the variable $A$. Its behaviour reminds the quantum integers. Clearly $[1]=A^{-1}$,
$[-1]=A$, and we have the recurrence formulas 
\begin{center}
$[a]=A[a-1]+A^{-1}(-A^{-3})^{a-1}$ if $a>0$, \\
$[a]=A^{-1}[a+1]+A(-A^{3})^{-a-1}$ if $a<0$.
\end{center}

In addition we have the following equalities (proofs can be found in \cite{Pedro}):
\begin{lemma} \label{lemma 0} 
$\delta [a]=-A^{a}+(-A^{-3})^{a}$.
\end{lemma}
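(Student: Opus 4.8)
The plan is to argue by induction on $|a|$, using the two recurrence formulas and the explicit values $[1]=A^{-1}$, $[-1]=A$ recorded above. First I would dispose of the base cases directly. For $a=0$ both sides vanish, since $[0]=0$ while $-A^{0}+(-A^{-3})^{0}=-1+1=0$. For $a=1$ one computes $\delta[1]=(-A^{-2}-A^{2})A^{-1}=-A^{-3}-A$, which matches $-A+(-A^{-3})$; and symmetrically $\delta[-1]=(-A^{-2}-A^{2})A=-A^{-1}-A^{3}$, matching $-A^{-1}+(-A^{3})$.

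For the inductive step with $a>0$, I would multiply the recurrence $[a]=A[a-1]+A^{-1}(-A^{-3})^{a-1}$ by $\delta$ and substitute the inductive hypothesis $\delta[a-1]=-A^{a-1}+(-A^{-3})^{a-1}$ together with $\delta=-A^{-2}-A^{2}$. This yields
$$\delta[a]=A\bigl(-A^{a-1}+(-A^{-3})^{a-1}\bigr)+A^{-1}(-A^{-2}-A^{2})(-A^{-3})^{a-1}.$$
The key step is that, after expanding $A^{-1}(-A^{-2}-A^{2})=-A^{-3}-A$, the two terms carrying $A(-A^{-3})^{a-1}$ cancel, leaving $-A^{a}-A^{-3}(-A^{-3})^{a-1}=-A^{a}+(-A^{-3})^{a}$, as desired. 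The negative case $a<0$ is entirely parallel: multiplying $[a]=A^{-1}[a+1]+A(-A^{3})^{-a-1}$ by $\delta$, substituting $\delta[a+1]$, and using the identities $(-A^{-3})^{a+1}=(-A^{3})^{-a-1}$ and $(-A^{3})^{-a}=(-A^{-3})^{a}$ produces the same cancellation of a cross term and gives $\delta[a]=-A^{a}+(-A^{-3})^{a}$.

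The only real obstacle will be bookkeeping: keeping the signs $(-1)^{a}$ and the exponents of $A$ straight while rewriting powers of $-A^{-3}$ and $-A^{3}$ into one another, especially in the $a<0$ branch where negative exponents on $-A^{-3}$ must be converted. I expect no conceptual difficulty once the base cases are fixed, since each inductive step reduces to a one-line algebraic identity whose crux is the single cancellation described above. As an alternative to induction, one could substitute the closed form $[a]=A^{-2a/|a|-3a}\sum_{i=1}^{|a|}(-1)^{a+i}A^{4ia/|a|}$ directly and observe that multiplication by $\delta=-A^{-2}-A^{2}$ telescopes the sum, so that only the two boundary terms $i=1$ and $i=|a|$ survive; this recovers $-A^{a}+(-A^{-3})^{a}$ in closed form without any induction.
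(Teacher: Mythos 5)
Your proof is correct. Note that the paper itself does not prove this lemma --- it simply cites \cite{Pedro} --- so there is no in-text argument to compare against; but both of your routes check out: the induction via the recurrences (the cross terms $A(-A^{-3})^{a-1}$, resp.\ $A^{-1}(-A^{3})^{-a-1}$, do cancel exactly as you say, leaving $(-A^{-3})^{a}-A^{a}$), and the direct telescoping, since multiplying the closed form by $\delta=-A^{-2}-A^{2}$ sends $A^{4i}$ to $-(A^{4i-2}+A^{4(i+1)-2})$ and collapses the sum to the boundary terms $(-1)^{a}A^{2}-A^{4|a|+2}$ (for $a>0$), which after multiplying by the prefactor $A^{-2-3a}$ gives $(-A^{-3})^{a}-A^{a}$.
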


\begin{lemma} \label{lemma 1}
If the number of entries is greater than one, then
$$
\langle P (...,a_{i-1},a,a_{i+1},...)\rangle 
= A^{a}\! \langle P(...,a_{i-1},0,a_{i+1},...)\rangle 
+  [a] \langle P(...,a_{i-1},a_{i+1},...)\rangle.
$$
In particular $\langle P(...,a,0)\rangle 
= (A^{a}\delta +[a])\langle P(...,0)\rangle$.
\end{lemma}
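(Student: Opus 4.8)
The plan is to deduce the identity directly from the closed formula of Theorem~\ref{TheoremMain}, which we are free to treat here as a cited black box. The idea is simply to factor out, in each of the three brackets appearing in the statement, the common contribution of the slots $j\neq i$, and then to collect terms.

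Write $\Pi=\prod_{j\neq i}(A^{a_j}\delta+[a_j])$ and $\pi=\prod_{j\neq i}[a_j]$ for these two partial products over the remaining entries. By Theorem~\ref{TheoremMain} the left-hand side is $\langle P(\ldots,a,\ldots)\rangle=(A^{a}\delta+[a])\,\Pi+(\delta^{2}-1)\,[a]\,\pi$. For the first bracket on the right I would substitute the entry $a$ by $0$ and use $[0]=0$ together with $A^{0}=1$, which collapses the formula to $\langle P(\ldots,0,\ldots)\rangle=\delta\,\Pi$. For the second bracket I would delete the $i$-th slot; since by hypothesis the number of entries exceeds one, the reduced diagram $P(\ldots,a_{i-1},a_{i+1},\ldots)$ still has at least one entry, so Theorem~\ref{TheoremMain} applies to it and gives $\langle P(\ldots,a_{i-1},a_{i+1},\ldots)\rangle=\Pi+(\delta^{2}-1)\,\pi$. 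Substituting both into $A^{a}\langle P(\ldots,0,\ldots)\rangle+[a]\langle P(\ldots,a_{i-1},a_{i+1},\ldots)\rangle$ and regrouping recovers exactly $(A^{a}\delta+[a])\,\Pi+(\delta^{2}-1)\,[a]\,\pi$, i.e. the left-hand side. The only point that needs care is the role of the hypothesis $n>1$: it is precisely what guarantees that the slot-deletion produces a genuine pretzel still covered by the formula.

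For the ``in particular'' statement I would apply the identity just proved to the slot carrying $a$ in $P(\ldots,a,0)$, obtaining $\langle P(\ldots,a,0)\rangle=A^{a}\langle P(\ldots,0,0)\rangle+[a]\langle P(\ldots,0)\rangle$. It then remains to check that appending a trailing $0$-entry multiplies the bracket by $\delta$, namely $\langle P(\ldots,0,0)\rangle=\delta\,\langle P(\ldots,0)\rangle$; this again follows at once from Theorem~\ref{TheoremMain}, since each trailing $0$ contributes a factor $A^{0}\delta+[0]=\delta$ to the first product and annihilates the second product through $[0]=0$. Substituting yields $(A^{a}\delta+[a])\langle P(\ldots,0)\rangle$, as claimed.

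Because Theorem~\ref{TheoremMain} is cited rather than reproved here, the argument above is non-circular in the present context and essentially free of obstacles --- it is routine bookkeeping once $[0]=0$ is exploited. The genuinely substantive route, which one would follow for a self-contained proof, is skein-theoretic: expand the $a$ crossings of slot $i$ in the Temperley--Lieb algebra on two strands, writing the twist tangle as $A^{a}\cdot(\text{identity tangle})+c_{a}\cdot(\text{cup-cap})$, and then identify the identity tangle with the $0$-entry and the cup-cap with the deletion of slot $i$. On that route the main obstacle would be twofold: verifying by induction (using relations (i)--(iii) and the stated recurrences for $[a]$) that the cup-cap coefficient $c_{a}$ equals $[a]$, and checking the geometric claim that inserting the cup-cap into slot $i$ produces the pretzel with that slot removed \emph{without} creating a spurious extra circle --- which is exactly the place where the hypothesis $n>1$ enters.
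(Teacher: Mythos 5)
Your algebra is correct: writing $\Pi=\prod_{j\neq i}(A^{a_j}\delta+[a_j])$ and $\pi=\prod_{j\neq i}[a_j]$, the closed formula of Theorem~\ref{TheoremMain} does give $\delta\,\Pi$ for the $0$-entry diagram (since $[0]=0$) and $\Pi+(\delta^{2}-1)\pi$ for the slot-deleted diagram, and the two sides match; the handling of the trailing zero in the ``in particular'' clause is also fine. Note, however, that the paper does not prove this lemma at all --- it is quoted from \cite{Pedro} --- and there the logical order is the reverse of yours: the lemma is established skein-theoretically (precisely the Temperley--Lieb decomposition you sketch at the end, with the cup-cap coefficient identified as $[a]$ via the stated recurrences, and the hypothesis $n>1$ ensuring that inserting the cup-cap deletes the slot rather than creating an extra circle), and it is then the inductive engine behind Theorem~\ref{TheoremMain}. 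So your primary argument, while a valid implication, verifies the lemma only modulo a theorem whose standard proof already contains it; what it buys is brevity and a clean consistency check of the closed formula, whereas the skein route is the one carrying the actual mathematical content. You are right to flag this distinction yourself, and your sketch of the substantive route is accurate as far as it goes; to stand as a self-contained proof it would still need the inductive verification that the cup-cap coefficient $c_a$ satisfies $c_1=A^{-1}$, $c_{-1}=A$ and the recurrences defining $[a]$.
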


\begin{remark} \label{remark}
$\langle P(a)\rangle = \delta (\!-A^{-3})^{a}$ since $P(a)$ is the trivial knot diagram with $|a|$ kinks, negative kinks if $a>0$, positive kinks if $a<0$.
\end{remark}

\section{Span of the Jones polynomial of pretzel links} \label{SectionSpanJones}
If $L$ is an oriented link, we denote by $V(L)$ its Jones polynomial with normalization $V(\put(0,3){\unknot } \hspace{0.25cm})
=-t^{-1/2}-t^{1/2}$ (see \cite{Lickorish}). Recall that $V(L)=(-A)^{-3w(D)}\langle D\rangle $ after the substitution
$A=t^{-1/4}$, where $D$ is an oriented diagram of $L$ and $w(D)$ is its writhe. It follows that $\tn{span}(\langle D\rangle
) = 4 \, \tn{span}(V(L))$.

\begin{remark}
Denote by $\langle D\rangle _{1}$ the Kauffman bracket of the diagram $D$ defined through the same relations (i)
and (ii) but with normalization $\langle \put(0,3){\unknot } \hspace{0.25cm} \rangle _{1}=1$. It follows that $\langle D\rangle =\delta \langle D\rangle _{1}$ for every link diagram $D$.
In parallel, denote by $V_1(L)$ the Jones polynomial of the oriented link $L$ with normalization $V_1(\put(0,3){\unknot } \hspace{0.25cm})=1$. Recall that $V_1(L) = (-A)^{-3w(D)}\langle D\rangle _1$ after the
substitution $A=t^{-1/4}$, where $D$ is an oriented diagram of $L$ and $w(D)$ is its writhe. It follows that $V(L) = (-t^{-1/2}-t^{1/2})V_{1}(L)$ for every oriented link $L$, and hence $\tn{span}(V(L)) = 1 + \tn{span}(V_1(L))$.
\end{remark}

\begin{notation} \label{parametros} For a pretzel link diagram $P(a_1, \ldots ,a_n)$, we write $r$ for the number of $a_i>1$, $s$ for the number of $a_i<-1$, $z$ for the number of $a_i=0$, $\alpha $ for the number of $a_i=1$ and $\beta $ for the number of $a_i=-1$. We also set $\lambda =\alpha -\beta$. Finally, let $\Sigma = \Sigma (P(a_1, \ldots, a_n)) = \sum_{|a_i|>1} |a_i|$.
\end{notation}

\begin{remark} \label{obs} Recall that:
\begin{enumerate}
\item \label{obs1} For any permutation $\sigma $ of $\{1,\ldots ,n\}$ the pretzel link defined by the  diagram $P(a_{\sigma (1)},\dots
,a_{\sigma (n)})$ can be obtained applying a finite sequence of mutations to the link defined by 
$P(a_{1},\dots ,a_{n})$, and thus their Jones polynomials agree. That the Kauffman bracket of the corresponding diagrams agree follows directly from the formula in Theorem~\ref{TheoremMain}.

\item \label{obs2}  
If $\bar{L}$ is the mirror image of the oriented link $L$, then $V(\bar{L})$ is obtained from $L$ by interchanging $t$ and $t^{-1}$, so both polynomials share span. In particular the span of the Jones polynomials of the pretzel links defined by the diagrams $P(a_1, \ldots, a_n)$ and $P(-a_1, \ldots, -a_n)$ is the same. Notice also that the symmetry $a_i \mapsto -a_i$ interchanges $r$ and $s$ and takes $\lambda$ into~$-\lambda$.

\item \label{obs3}  Any consecutive pair $(1,-1)$ in $(a_1,\ldots ,a_n)$ can be canceled via a type~II Reidemeister move without changing the link type. Moreover, after a $\pi$-rotation, any pair $(1,-1)$ can be canceled, even if they are non-consecutive (see Figure \ref{FigureUnomenosuno}). If there are only entries $\pm 1$, and in the same amount, the link is just the split union of two trivial knots, ${\put(0,3)\unknot \, \put(9,3) \unknot \hspace{0.55cm}}$, with Jones polynomial $(-t^{-1/2}-t^{1/2})^2$ and span $2$.
\begin{figure}[ht!]
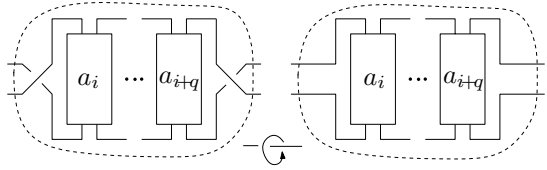
 
\labellist
 \pinlabel {$a_i$} at 87 90
 \pinlabel {$a_{i\!+\!q}$} at 185 90
 \pinlabel {$a_i$} at 395 90
 \pinlabel {$a_{i\!+\!q}$} at 491 90
\endlabellist
\begin{center}
\unomenosuno
\end{center}
\caption{Canceling a pair $(1,-1)$, consecutive or not}
\label{FigureUnomenosuno}
\end{figure}

\item \label{obs4} 
Similarly, if some entry $a_i$ is equal to 0, then any entry $-1$ or $+1$ can be deleted by a $\pi$-rotation without changing the link type. 

\item \label{obs5} For $n\geq 2$, the pretzel diagrams $P(1,-2,a_3,\dots, a_n)$ and $P(2, a_3,\dots, a_n)$ define the same pretzel link. Indeed, this can be achieved by a combination of a type~III plus a type~I Reidemeister moves (see Figure \ref{FigureTresuno}). 
\begin{figure}[ht!]
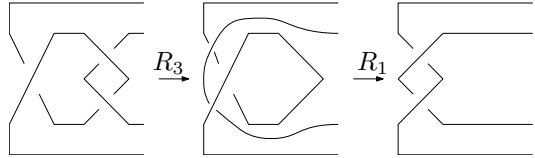
 
\labellist
 \pinlabel {$R_3$} at 170 98
 \pinlabel {$R_1$} at 390 98
\endlabellist
\begin{center}
\tresuno
\end{center}
\caption{$P(1,-2, a_3,  \dots, a_n)$ is isotopic to $P(2, a_3, \dots, a_n)$}
\label{FigureTresuno}
\end{figure}
\end{enumerate}
\end{remark}
 
Based on Remarks \ref{obs}.\ref{obs3} and \ref{obs}.\ref{obs4} we introduce the following definition:

\begin{definition}
We say that a pretzel diagram $P$ is {\it reduced} if it is $P=P(1,-1)$ or satisfies the following two conditions:
\begin{itemize}
\item[(i)] $\alpha\beta=0$, i.e., it does not have simultaneously entries equal to $+1$ and $-1$, and 
 \item[(ii)] if $z\not=0$, then $\alpha=\beta=0$.
\end{itemize}   
\end{definition}

\begin{theorem} \label{TheoremSpan} 
Let $P(a_1, \ldots ,a_n)$ be an unoriented, reduced pretzel diagram of an oriented pretzel link $L$, with $r$, $s$, $z$, $\lambda$ and $\Sigma$ as in Notation~\ref{parametros}. Let $S$ be the span of the Jones polynomial $V(L)$ of $L$. 
Since we are interested in the calculus of this span, we can assume that $a_1\geq \dots \geq a_n$ by Remark \ref{obs}.\ref{obs1}. Then:
\begin{enumerate}
\item $S = \Sigma + z$ if $z>0$.
\end{enumerate}
In the remaining cases we assume $z = 0$.
\begin{enumerate}
\setcounter{enumi}{1}
\item $S = \Sigma - \min \{1,r + \lambda ,s - \lambda \} + 1$ if $r + \lambda \neq 1$ and $s - \lambda \neq 1$, except the case $P(1,-1)$ for which $S=2$.
\end{enumerate}
In the remaining cases we consider $z=0$ and $r+\lambda =1$ (taking the mirror image the case $s-\lambda=1$ can be reduced to this one by Remark  \ref{obs}.\ref{obs2}).
\begin{enumerate}
\setcounter{enumi}{2}
\item In this item we assume $r>1$.
\begin{enumerate}
	\item $S = \Sigma - 1$ if $(r,\lambda, s) \not= (2,-1,0)$.
\end{enumerate}
In the rest of the cases in this item we assume that $(r,\lambda, s) = (2,-1,0)$.	
\begin{enumerate}
\setcounter{enumii}{1}
	\item $S= \Sigma -2 = 2$  if $a_2=2$ and $a_1=2$.
	\item $S= \Sigma -4 = 1$  if $a_2=2$ and $a_1=3$.
	\item $S= \Sigma -3 = a_1-1$  if $a_2=2$ and $a_1>3$.
	\item $S= \Sigma -2 = a_1+a_2-2$  if $a_2 > 2$.
	\end{enumerate}
\item In this item we assume $r=1$ and $s\leq 1$ (so $a_1>1$ and, if $s=1$, $-1>a_2$).
 \begin{enumerate} 
	\item $S = 1 = \Sigma - a_1 +1$ if $s=0$. 
	\item $S = 2$ if $s=1$ and $|a_1+a_2| = 0$.
	\item $S = 1$ if $s=1$ and $|a_1+a_2| = 1$.
	\item $S = 1+ |a_1+a_2|$ if $s=1$ and $|a_1+a_2| > 1$.
\end{enumerate}
\item In this item we assume $r=1$ and $s > 1$ (so $a_1>1$ and $-1 > a_2\geq a_3\geq \dots$).
 \begin{enumerate} 
	\item $S = \Sigma - \min \{a_{1},|a_{2}| - 1\}$ if $a_1\not = |a_2|-1$.
	\item $S = \Sigma - \min \{|a_{2}|,|a_{3}| - 1\}$ if $a_1 = |a_2|-1$ and $|a_{2}| \neq |a_{3}| - 1$. 
	\item $S=2a_1$ if $s=2$, $a_1 = |a_2|-1$ and $|a_{2}| = |a_{3}| - 1$, except the case $P(2,-3,-4)$, for which $S = 3$. 
	\item $S = \Sigma - a_1 - 3$ if $a_1 = |a_2|-1$, $|a_2| = |a_3| - 1$ and $|a_3| < |a_4| -1$, except the cases $P(2,-3,-4,a_4)$ with $a_4<-6$, for which~$S = \Sigma - 6$.
	\item $S = \Sigma - a_1 - 2$ if $a_1 = |a_2|-1$, $|a_2| = |a_3| - 1$ and $|a_3| = |a_4| -1$.
      \item $S = \Sigma - a_1 - 1$ if $a_1 = |a_2| - 1$, $|a_2| = |a_3| - 1$ and $|a_3| = |a_4|$.
 \end{enumerate} 
\item In this item we assume $r=0$ and $a_2 \not= -2$ (so $a_1=1$, $a_2<-2$ or $s=0$).
\begin{enumerate}
	\item $S = \Sigma +1 = 1$ if $s = 0$. 
	\item $S = \Sigma = -a_2$ if $s = 1$. 
	\item $S = \Sigma - 2$ if $s = 2$. 
	\item $S = \Sigma - 1$ if $s>2$.
\end{enumerate}
\item In this item we assume $r=0$ and $a_2 = -2$ (so $a_1=1$ and $a_2=-2$). 
\begin{enumerate}
	\item $S= \Sigma -1 = 1$ if $s=1$.
	\item $S= \Sigma -2 = 2$ if $s=2$ and $a_3=-2$.
	\item $S= \Sigma -4 = 1$ if $s=2$ and $a_3=-3$.
	\item $S= \Sigma -3 = -1-a_3$ if $s=2$ and $a_3<-3$.
	\item $S= \Sigma -1 = 3-a_4$ if $s=3$ and $a_3=-2$.
	\item $S= \Sigma -2 = 6$ if $s=3$, $a_3=-3$ and $a_4=-3$.
	\item $S= \Sigma -6 = 3$ if $s=3$, $a_3=-3$ and $a_4=-4$.
	\item $S= \Sigma -3 = 2-a_4$ if $s=3$, $a_3=-3$ and $a_4<-4$.
	\item $S= \Sigma -2 = -a_3-a_4$ if $s=3$ and $a_3<-3$.
	\item $S=\Sigma -1$ if $s>3$ and $a_3=-2$.
	\item $S=\Sigma -2$ if $s>3$, $a_3=-3$ and $a_4=-3$.
	\item $S=\Sigma -3$ if $s>3$, $a_3=-3$, $a_4=-4$ and $a_5=-4$.
	\item $S=\Sigma -4$ if $s>3$, $a_3=-3$, $a_4=-4$ and $a_5=-5$.
	\item $S=\Sigma -5$ if $s>3$, $a_3=-3$, $a_4=-4$ and $a_5<-5$, except for the pretzel $P(1, -2, -3, -4, a_5)$ with $a_5<-6$, for which $S=\Sigma - 6$.
	\item $S=\Sigma -3$ if $s>3$, $a_3=-3$ and $a_4<-4$.
	\item $S=\Sigma -2$ if $s>3$ and $a_3<-3$.
	
\end{enumerate}
\end{enumerate}
\end{theorem}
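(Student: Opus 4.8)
The plan is to read the span straight off the closed formula of Theorem~\ref{TheoremMain}, using $S=\tn{span}(V(L))=\tfrac14\,\tn{span}_A\langle P(a_1,\dots,a_n)\rangle$. I write $\langle P\rangle=F+G$ with $F=\prod_{i=1}^n(A^{a_i}\delta+[a_i])$ and $G=(\delta^2-1)\prod_{i=1}^n[a_i]$, so the whole problem becomes: locate the highest and lowest powers of $A$ that actually survive in $F+G$, and divide their difference by $4$.

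First I would record the extreme $A$-degrees of the building blocks, all of which follow directly from the definition of $[a]$: the polynomial $[a]$ occupies degrees $[\,2-3a,\;a-2\,]$ for $a>1$ and $[\,2-|a|,\;3|a|-2\,]$ for $a<-1$, with $[1]=A^{-1}$, $[-1]=A$, $[0]=0$; and the factor $A^a\delta+[a]$ occupies $[\,2-3a,\;a+2\,]$ for $a>1$ and $[\,a-2,\;3|a|-2\,]$ for $a<-1$, equals $-A^3$ for $a=1$, $-A^{-3}$ for $a=-1$, and $\delta$ for $a=0$. Because $F$ and $G$ are products of Laurent polynomials with nonzero leading and trailing coefficients, their own extreme degrees are exactly the sums of the per-factor ones, so the only possible cancellation is between $F$ and $G$. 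Two consequences organize everything. If $z>0$ then $[0]=0$ kills $G$, so $\langle P\rangle=F$ and summing factor degrees gives $\tn{span}_A F=4(\Sigma+z)$, which is item~1. If $z=0$, the same bookkeeping yields the two decisive comparisons
$$\deg_{\max}F-\deg_{\max}G=4(r+\lambda-1),\qquad \deg_{\min}F-\deg_{\min}G=4(1+\lambda-s),$$
together with $\tn{span}_A F=4\Sigma$ and $\tn{span}_A G=8+4(\Sigma-r-s)$.

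When $r+\lambda\neq1$ and $s-\lambda\neq1$ the leading (resp.\ trailing) terms of $F$ and $G$ sit at distinct degrees, so $\deg_{\max}\langle P\rangle=\max\{\deg_{\max}F,\deg_{\max}G\}$ and $\deg_{\min}\langle P\rangle=\min\{\deg_{\min}F,\deg_{\min}G\}$ with no interference. Splitting on the signs of $r+\lambda-1$ and $s-\lambda-1$ and assembling the spans, three of the four patterns collapse to $S=\Sigma-\min\{1,r+\lambda,s-\lambda\}+1$, which is item~2. The fourth pattern (both differences negative) is forced by the reduced hypothesis and $\alpha\beta=0$ to be the single diagram $P(1,-1)$, a split union whose span comes entirely from $G$ and equals $2$; this is precisely the listed exception. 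Thus everything is settled except the resonance $r+\lambda=1$, which by the mirror symmetry of Remark~\ref{obs}.\ref{obs2} also absorbs $s-\lambda=1$.

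The hard part is exactly this resonance, where $\deg_{\max}F=\deg_{\max}G=:M$ and one must decide whether, and how far, the top terms cancel. A pleasant simplification organizes the attack: computing the leading coefficient of $F$ as a product of factor leading coefficients, and likewise for $G$, shows they are opposite precisely when $r+\alpha+\beta$ is odd — and for a reduced diagram with $r+\lambda=1$ the condition $\alpha\beta=0$ forces $r+\alpha+\beta$ to be odd always. Hence the top terms always cancel in items 3--7, and the entire content there is to determine how far below $M$ the cancellation propagates. I would expand $F$ and $G$ to successive orders and track the surviving term, organizing first by $r$ (the reduced hypothesis with $r+\lambda=1$ pins down the $\pm1$ entries: $r>1$ forces $\alpha=0,\beta=r-1$; $r=1$ forces $\alpha=\beta=0$; $r=0$ forces $\alpha=1,\beta=0$, so $a_1=1$, whence the split at $a_2=-2$), and then by the arithmetic of the $a_i<-1$, in particular whether consecutive values $|a_2|,|a_3|,|a_4|$ form a run differing by $1$. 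The main obstacle is controlling this cancellation cascade uniformly enough to identify the exact extreme term in every branch; the most delicate sub-cases are the doubly resonant ones where $s-\lambda=1$ as well — for instance $(r,\lambda,s)=(2,-1,0)$ in item~3 and $(r,\lambda,s)=(1,0,1)$ in item~4 — in which cancellation must be tracked at both ends, and the sporadic exceptions ($P(2,-3,-4)$, $P(2,-3,-4,a_4)$, $P(1,-2,-3,-4,a_5)$, and the small pieces of items~3 and~7) are exactly the short configurations where the cascade terminates anomalously and which I would confirm by direct evaluation of the bracket.
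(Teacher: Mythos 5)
Your setup is sound and is essentially the paper's own: the paper also splits the bracket into the two summands of Theorem~\ref{TheoremMain} (after multiplying by $\delta^n$ and substituting $B=A^4$, which only tidies the bookkeeping), computes the four extreme degrees $h_F$, $l_F$, $h_S$, $l_S$, and observes that everything is governed by the signs of $r+\lambda-1$ and $s-\lambda-1$. Your identities $\deg_{\max}F-\deg_{\max}G=4(r+\lambda-1)$ and $\deg_{\min}F-\deg_{\min}G=4(1+\lambda-s)$ agree with the paper's $h_F-h_S=r+\lambda-1$ and $l_F-l_S=1+\lambda-s$, and your parity criterion for the leading coefficients being opposite reproduces the paper's cancellation of the top terms (in fact $r+\lambda=1$ alone forces $r+\alpha+\beta=1+2\beta$ to be odd, so the reduced hypothesis is not even needed for that step). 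Items \emph{1} and \emph{2} are therefore genuinely established in your write-up.

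The gap is that items \emph{3}--\emph{7} --- which are the entire substance of the theorem and occupy the whole appendix of the paper --- are only announced, not proved. Saying you ``would expand $F$ and $G$ to successive orders and track the surviving term'' does not show that the cancellation cascade stops where the statement says it does: in item \emph{5} the depth of cancellation depends on delicate coincidences among $a_1,|a_2|,|a_3|,|a_4|$ (whether consecutive absolute values differ by exactly one), and the paper needs several pages of explicit coefficient tracking, with the case $a_1=2$ of item \emph{5.4} handled separately, to locate the first non-cancelling monomial. Moreover, the exceptions are not all ``short configurations'' that can be confirmed by direct evaluation: $P(2,-3,-4,a_4)$ with $a_4<-6$ is an infinite family, and its answer $S=\Sigma-6$ (rather than the generic $\Sigma-a_1-3=\Sigma-5$) is exactly the point at which the original statement in \cite{Pedro} was wrong, so it must come out of the same order-by-order expansion rather than a finite check. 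Until that expansion is carried out for each branch of items \emph{3}--\emph{7} (or at least for item \emph{5}, from which the paper derives items \emph{3.2}--\emph{3.5} and \emph{7} via Remark~\ref{obs}.\ref{obs5}), what you have is a correct strategy with its main body missing.
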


\begin{remark}
The exception stated in item {\it 5.4} of the previous theorem amendments the original statement of Theorem 2 in \cite {Pedro}. The proof of this theorem, including this correction and more details of item {\it 5} is given in a final appendix. 
\end{remark}

\begin{remark}
Item {\it 1} of Theorem \ref{TheoremSpan} analyzes when there is an entry equal to zero. The rest of cases assume that there are no entries equal to zero. Item~{\it 2} considers the generic case: $(r, \lambda, s)$ is neither in the plane $r+\lambda =1$ nor in $s-\lambda =1$. The symmetry $a_i \mapsto -a_i$ interchanges $r$ and $s$ and takes $\lambda$ into $-\lambda$, so $(r, \lambda, s) \mapsto (s, -\lambda, r)$. It follows that it is enough to give a formula for the span assuming that the point $(r, \lambda, s)$ is in the plane $r+\lambda =1$ (light blue plane in Figure \ref{FigurePolitopo}). We then distinguish cases $r>1$, $r=1$ and $r=0$. We deal with case $r>1$, except for the point $(r, \lambda, s)= (2,-1, 0)$, in {\it 3.1}. Note that $r+\lambda =1$, $r>1$ and $s-\lambda =1$ implies $(r, \lambda, s) = (2,-1,0)$. By symmetry this case can be studied as the case $(r,\lambda,s)=(0,1,2)$. We deal with case $r=1$ and $s\leq 1$ in {\it 4}. The difficult case, when $r=1$ and $s>1$, is treated in {\it 5}. Finally items {\it 6} and {\it 7} consider the cases when $r=0$, depending on whether or not $a_2=-2$ (see Figure \ref{FigurePlano}). Note that item {\it 7} can be directly deduced from items {\it 4} and {\it 5} by Remark~\ref{obs}.\ref{obs5}, but it has been included in the statement for practical computation purposes.
\end{remark}

\begin{figure}[ht!]
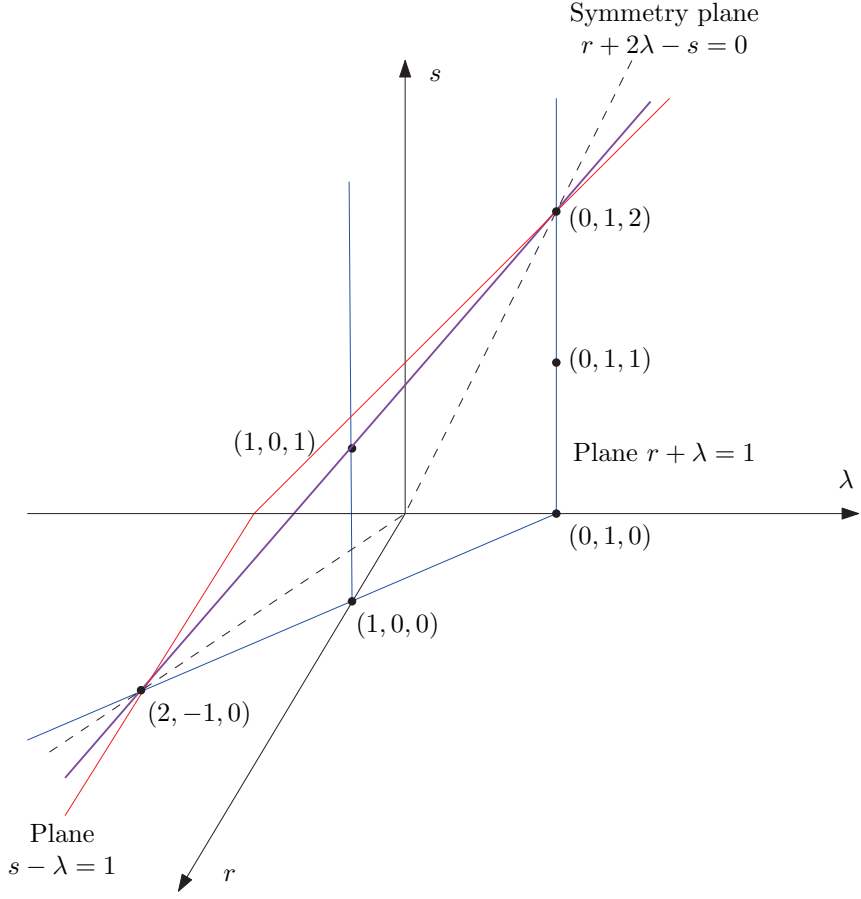
 
\labellist
 \pinlabel {\begin{tabular}{c} Plane $r+\lambda =1$ \end{tabular}} at 270 185
 \pinlabel {\begin{tabular}{c} Plane \\ $s - \lambda = 1$ \end{tabular}} at 15 17
 \pinlabel {\begin{tabular}{c} Symmetry plane \\  $r + 2\lambda - s = 0$ \end{tabular}} at 270 365 
 \pinlabel {$r$} at 86 7
 \pinlabel {$\lambda$} at 347 175
 \pinlabel {$s$} at 173 346
 \pinlabel {$(0,1,0)$} at 247 150
 \pinlabel {$(0,1,1)$} at 247 225
 \pinlabel {$(0,1,2)$} at 247 285
 \pinlabel {$(1,0,0)$} at 157 113
 \pinlabel {$(1,0,1)$} at 105 190
 \pinlabel {$(2,-1,0)$} at 73 75
\endlabellist
\begin{center}
\politopo
\end{center}
\caption{Cases in Theorem \ref{TheoremSpan} according to the values of $(r, \lambda, s)$}
\label{FigurePolitopo}
\end{figure}

\begin{figure}[ht!] 
\labellist
\pinlabel {$r$} at 543 33
\pinlabel {$s$} at 97 480
\pinlabel{\small \begin{tabular}{cc} Item {\it 6.4} \\ $a_2\not= -2$ \end{tabular}} at 0 445 
\pinlabel{\small \begin{tabular}{cc} Expand and \\ calculate \end{tabular}} at 0 368 
\pinlabel{\small \begin{tabular}{cc} Item {\it 6.3} \\ $a_2\not= -2$ \end{tabular}} at 0 275 
\pinlabel{\small \begin{tabular}{cc} Item {\it 7.1} \\ $a_2=-2$ \end{tabular}} at 0 150 
\pinlabel{\small \begin{tabular}{cc} Items {\it 7.5-16} \\ $a_2 = -2$ \end{tabular}} at 165 440 
\pinlabel{\tiny \begin{tabular}{cc} Items {\it 7.2-4} \\ $a_2 = -2$ \end{tabular}} at 205 220 
\pinlabel{\small Items {\it 4.2-4}} at 275 160 
\pinlabel{\small \begin{tabular}{cc} Item {\it 6.2} \\ $a_2 \not= -2$ \end{tabular}} at 135 175 
\pinlabel{\small \begin{tabular}{cc} Item {\it 1} \\ $z>0$ \end{tabular}} at 160 100 
\pinlabel{\small Item {\it 5}} at 290 368 
\pinlabel{\small Item {\it 3.1}} at 420 280
\pinlabel{\small Item {\it 3.2}} at 395 55
\pinlabel{\small Item {\it 6.1}} at 135 10 
\pinlabel{\small Item {\it 4.1}} at 265 10 
\pinlabel{\small $s-\lambda = 1$} at 420 10 
\endlabellist
\begin{center}
\plano
\end{center}
\caption{Plane $r+\lambda =1$ coordinated by $(r, s)$. An oval means that the corresponding knot is the unknot}
\label{FigurePlano}
\end{figure}

\section{There are finitely many pretzel links with a fixed span of the Jones polynomial} \label{SectionFewCases}

Suppose that $D$ is a connected diagram with $n$ crossings of an oriented link $L$ with Jones polynomial $V(L)$. Recall that $\tn{span}(V(L)) \leq n+1$ (see \cite{Lickorish}).

\begin{theorem} \label{TheoremAcotando}
Let $P(a_1, \dots, a_n)$ be a pretzel diagram of a pretzel link~$L$. Let $V(L)$ be the Jones polynomial of $L$, where $L$ has been arbitrarily oriented. Suppose that $L$ is not a torus link $T(2,n)$ with two strands. Then 
$$
\tn{span }(V(L))
\geq 
\sum_{|a_i|>1} |a_i| - \min \{ |a_i| \, /\, |a_i|>1\} - 4.
$$
\end{theorem}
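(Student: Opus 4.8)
The plan is to read the bound off the exact span formula of Theorem~\ref{TheoremSpan}. First I would replace $P(a_1,\dots ,a_n)$ by a reduced, decreasingly ordered diagram; by Remark~\ref{obs} this preserves the link type (hence $V(L)$ and $S$), and since the reduction only erases entries equal to $0$ or $\pm 1$ it also preserves both $\Sigma$ and $\mu:=\min\{\,|a_i| : |a_i|>1\,\}$. I may then assume $\Sigma>0$, since otherwise there is no entry with $|a_i|>1$ and the right-hand side is an empty minimum, so the statement is vacuous. Writing the claim as $\Sigma-S\le \mu+4$, the proof reduces to bounding the \emph{defect} $\Sigma-S$ in each of the items \emph{1}--\emph{7}.

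Several items are immediate. In items \emph{1} and \emph{2} one has $S\ge\Sigma$ (the minimum in item~\emph{2} is at most $1$), so the defect is $\le 0$. In item~\emph{3} the five formulas give defects $1,2,4,3,2$, and in item~\emph{6} the formulas give a defect of at most $2$; in both items the defect is bounded by the absolute constant $4$, which is $\le\mu+4$ because $\mu\ge 2$. So these items cost no real work.

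Item~\emph{4} is the one regime where a bound of this shape cannot hold, and this is exactly where the torus hypothesis enters. Under the constraints of item~\emph{4} the reduced diagram has no entry equal to $0$ or $\pm 1$ and at most two entries of absolute value $>1$, so $L$ is a one- or two-band pretzel. Its pretzel surface is an annulus carrying $a_1+a_2$ half-twists, whose boundary is the torus link $T(2,a_1+a_2)$ (a single band gives the unknot $T(2,1)$); equivalently one can check from Theorem~\ref{TheoremMain} that $\langle P(a_1,a_2)\rangle$ equals $\langle T(2,a_1+a_2)\rangle$ up to mirror image. These are precisely the pretzels for which $\Sigma=|a_1|+|a_2|$ is unbounded while $S=1+|a_1+a_2|$ (item~\emph{4.d}) stays small, so no inequality $S\ge\Sigma-\mu-4$ can survive; the hypothesis removes exactly this family, and nothing needs to be estimated.

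The substance of the argument is then the sharp bookkeeping in items \emph{5} and \emph{7}. In item~\emph{5} one has $\mu=a_1$ in all sub-cases except \emph{5.a} (where the defect is directly $\le\mu$), and the defect equals $a_1+k$ with $0\le k\le 3$ throughout \emph{5.b}--\emph{5.f}, reaching $a_1+4=\mu+4$ only at the listed exceptions $P(2,-3,-4)$ and $P(2,-3,-4,a_4)$. In item~\emph{7} one has $\mu=2$ throughout, and a pass over the sixteen sub-cases shows the defect never exceeds $6=\mu+4$, the value $6$ being attained exactly at $P(1,-2,-3,-4)$ and $P(1,-2,-3,-4,a_5)$ with $a_5<-6$. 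I expect this case-by-case tracking of the defect, together with verifying that its maximum is precisely $\mu+4$ (and hence that the constant $4$, not $3$, is forced), to be the only delicate part; establishing rigorously that the item~\emph{4} configurations are genuinely torus links, rather than merely Jones-equivalent to them, is the other point I would treat with care, via the annulus description above.
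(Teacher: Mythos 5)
Your proposal is correct and follows essentially the same route as the paper: reduce and order the diagram, run through the items of Theorem~\ref{TheoremSpan} bounding the defect $\Sigma-S$ in each, identify the two-entry pretzels of item \emph{4} with the torus links $T(2,a_1+a_2)$ excluded by hypothesis, and locate the sharp cases $P(2,-3,-4)$, $P(2,-3,-4,a_4)$ and their item-\emph{7} counterparts. The only cosmetic difference is that the paper handles item \emph{4.1} ($s=0$, the unknot) directly via $S=1>\Sigma-M-4$ rather than folding it into the torus exclusion, and discards $s=1$ exactly as you do via $P(a,b)\sim P(0,a+b)=T(2,a+b)$.
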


\begin{proof}
Let $\Sigma = \sum_{|a_i|>1} |a_i|$ and $M = \min \{ |a_i| /  |a_i|>1\}$ if $r+s>0$. If $r=s=0$ let $\Sigma = M = 0$. Let $S= \tn{span} (V(L))$. We want to prove that $S \geq \Sigma - M - 4$. 

Clearly we may assume that the pretzel diagram is reduced (note that if $P=P(1,-1)$, then  $S=2$ and $\Sigma -M - 4 = -4$). We may also assume that $a_1\geq a_2 \geq \dots \geq a_n$ by Remark~\ref{obs}.\ref{obs1}, hence the pretzel diagram $P(a_1, \dots, a_n)$ is under the hypothesis of Theorem~\ref{TheoremSpan}.

If $z>0$ then $S = \Sigma + z > \Sigma > \Sigma - M - 4$ by Theorem \ref{TheoremSpan}.1, since $M\geq 0$. Hence we may assume that $a_i \not =0$ for any $i=1, \dots , n$.

Assume that $r+\lambda \not=1$ and $s-\lambda \not= 1$. Then by Theorem \ref{TheoremSpan}.2
$$
S = \Sigma - \min \{ 1, r+\lambda, s-\lambda\} +1 
\geq \Sigma - 1 +1 
= \Sigma > \Sigma - M - 4.
$$ 

Suppose that $s-\lambda =1$. Consider then the pretzel diagram $P'=P(-a_1, \dots, -a_n)$. On one hand, the values for $\Sigma$, $M$ and the bound $\Sigma-M-4$ are unchanged, since $\Sigma$ and $M$ are defined in terms of absolute values. On the other hand, the span of the new pretzel link is the same, by Remark \ref{obs}.\ref{obs2}. And for this new pretzel diagram $P'$ we have that $r'+\lambda' = s + (-\lambda) =1$. In other words, in order to conclude the proof we may assume that $r+\lambda =1$ (and $z=0$).

$\bullet$ Suppose that $r>1$. Then, by Theorem \ref{TheoremSpan}.3 we have $S \geq \Sigma -4 \geq \Sigma - M - 4$.

$\bullet$ Suppose that $r=1$, hence $\lambda = 0$. Recall that we have already deleted the pairs $+1, -1$, and in addition we have $a_1 \geq 2$, $a_i \leq -2$ for $i=2, \dots, n$ and $|a_2| \leq |a_3| \leq \dots |a_n|$.

If $s=0$ then $S = 1 = \Sigma - a_1 + 1 = \Sigma -M +1 > \Sigma - M - 4$ by Theorem~\ref{TheoremSpan}.4.1.

If $s>1$, we apply Theorem~\ref{TheoremSpan}.5:
\begin{enumerate}
\item If $a_{1}\neq |a_{2}| - 1$ then by Theorem~\ref{TheoremSpan}.5.1
$$
S = \Sigma - \min \{a_{1},|a_{2}| - 1\} 
\geq \Sigma - \min \{a_{1},|a_{2}|\}
= \Sigma - M 
> \Sigma - M - 4.
$$

In the following cases $a_1=|a_2|-1$ hence $M= a_1$. We have:
\item If $a_{1} = |a_{2}| - 1$ and $|a_{2}| \neq |a_{3}|-1$ then by Theorem~\ref{TheoremSpan}.5.2
$$
S = \Sigma - \min \{|a_{2}|,|a_{3}| - 1\} 
\geq \Sigma - (M + 1) > \Sigma - M - 4.
$$
\item If $s=2$,  $a_{1} = |a_{2}| - 1$ and $|a_{2}| = |a_{3}| - 1$ but the diagram is not $P(2,-3,-4)$, then
$$
\Sigma - M -4 
= (a_1-a_2-a_3)-a_1-4
= -a_2-a_3-4
= 2a_1-1 
< 2a_1
= S
$$
and for $P(2,-3,-4)$ the bound is sharp:  $\Sigma -M -4 = 9-2-4=3 = S$.

\item If $a_{1} = |a_{2}| - 1$, $|a_{2}| = |a_{3}| - 1$ and $|a_{3}| < |a_{4}| -1$ then by Theorem~\ref{TheoremSpan}.5.4
$$
S = \Sigma - a_1 - 3 > \Sigma - M - 4
$$
excepting the cases $P(2,-3,-4,a_4)$ with $a_4<-6$, where $S = \Sigma - 6 = \Sigma - 2 - 4 = \Sigma - M - 4$, again a case for which the bound is sharp.
\item If $a_{1} = |a_{2}| - 1$, $|a_{2}| = |a_{3}| -1$ and $|a_{3}| = |a_{4}| -1$ then by Theorem~\ref{TheoremSpan}.5.5
$$
S = \Sigma - a_1 - 2 = \Sigma - M - 2 > \Sigma - M - 4.
$$
\item If $a_{1} = |a_{2}| - 1$, $|a_{2}| = |a_{3}| - 1$ and $|a_{3}| = |a_{4}|$ then by Theorem~\ref{TheoremSpan}.5.6
$$
S = \Sigma - a_1 - 1 = \Sigma - M - 1 > \Sigma - M - 4.
$$
\end{enumerate}

We do not have to consider the case $s=1$ since $P(a, b) \sim P(0, a + b) = T(2, a + b)$ is a torus link with two strands. 

$\bullet$ Finally suppose that $r = 0$. Then the bound can be directly checked by simple inspection of items {\it 6} and {\it 7} of Theorem~\ref{TheoremSpan}, having in mind that $M=2$ for item~{\it 7} ({\it 7.7} and the exception in {\it 7.14} are cases of sharp bound).
\end{proof}

\begin{remark}\label{torus}
We must discard torus links $T(2,n)$ with two strands by the following: if $5 < a <- b-1$ in $P = P(a, b) \sim P(0, a + b) = T(2, a + b)$, then on one hand the Jones polynomial of the link has span $S = 1 + |a+b| = 1 - a - b$ by Theorem \ref{TheoremSpan}.1 and on the other hand $\Sigma - M - 4 = (a - b) - a - 4 = - b - 4$. And $1-a-b < -b - 4$ since $5<a$. This shows in particular that there are infinitely many pretzel diagrams with the same span. In the example all these diagrams correspond to the same torus link with two strands. Next we will see that these are the only exceptions. 
\end{remark}

\begin{theorem} \label{TheoremFewCases}
Given a natural number $S$, there are finitely many oriented pretzel links whose Jones polynomials have span $S$.
\end{theorem}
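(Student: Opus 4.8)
The plan is to fix the target span $S$ and bound all the discrete data of a \emph{reduced} pretzel diagram representing a link $L$ of span $S$, so that only finitely many such diagrams---and hence finitely many links---can occur. By Remarks \ref{obs}.\ref{obs3} and \ref{obs}.\ref{obs4} every pretzel link admits a reduced diagram, and by Remark \ref{obs}.\ref{obs1} I may order its entries as $a_1 \geq \cdots \geq a_n$, so that Theorem \ref{TheoremSpan} applies. The one genuine difficulty is that \emph{infinitely many distinct diagrams can represent the same link}: by Remark \ref{torus} this happens precisely for the two-strand torus links $T(2,m)$, which is exactly where the lower bound of Theorem \ref{TheoremAcotando} degenerates. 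I would therefore isolate the torus links first and bound the remaining diagrams afterwards.

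\emph{Step 1 (torus links).} Every two-strand torus link $T(2,m)$ is a pretzel link, namely $P(0,m)$ (equivalently $P(a,b)$ with $a+b=m$). For $|m|\geq 2$, item \textit{1} of Theorem \ref{TheoremSpan} gives $S = \Sigma + z = |m|+1$, while for $|m|\leq 1$ one obtains the unknot or the two-component unlink, of span $1$ or $2$. Hence a fixed $S$ forces $|m| = S-1$ (apart from the small exceptional values), so only finitely many torus links have span $S$, notwithstanding the infinitely many diagrams representing them.

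\emph{Step 2 (everything else).} Let $L$ be a pretzel link of span $S$ that is not a two-strand torus link, represented by a reduced ordered diagram with parameters $r,s,z,\lambda,\Sigma,M$. I first bound $\Sigma$. If $r+s\geq 2$, then at least two entries have absolute value $\geq M$, so $\Sigma \geq 2M$ and $M\leq \Sigma/2$; Theorem \ref{TheoremAcotando} then yields $S \geq \Sigma - M - 4 \geq \tfrac{1}{2}\Sigma - 4$, whence $\Sigma \leq 2S+8$. The degenerate cases $r+s\leq 1$, where Theorem \ref{TheoremAcotando} is vacuous (there $M=\Sigma$), I treat by hand: if $z>0$ then $\alpha=\beta=0$ and item \textit{1} gives $\Sigma \leq \Sigma+z = S$; if $z=0$, then up to mirror image (Remark \ref{obs}.\ref{obs2}) the diagram is a single entry $P(a_1)$ (the unknot), a two-entry diagram (a torus link, already handled), or has one large entry $a_1>1$ together with at least two unit entries of a common sign, i.e.\ $P(a_1,1,\dots,1)$ or $P(a_1,-1,\dots,-1)$, for which item \textit{2} gives $S=a_1+\alpha+1$ and $S=a_1+\beta$ respectively, bounding $a_1$ and the number of unit entries directly.

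With $\Sigma$ bounded, I bound the rest: the number of entries of absolute value $>1$ satisfies $r+s\leq \Sigma/2$, each such entry has absolute value at most $\Sigma$, and $z=S-\Sigma$ is bounded whenever $z>0$ by item \textit{1}. Since the diagram is reduced, $\alpha\beta=0$, so the number of unit entries equals $|\lambda|$, and it remains to bound $|\lambda|$. With $\Sigma,r,s$ already bounded and $z=0$, Theorem \ref{TheoremSpan} forces the span to grow without bound as $|\lambda|\to\infty$: off the planes $r+\lambda=1$ and $s-\lambda=1$, item \textit{2} gives $S = \Sigma - \min\{1,\,r+\lambda,\,s-\lambda\}+1$, which tends to infinity with $|\lambda|$, while on these planes $\lambda=1-r$ or $\lambda=s-1$ is already bounded. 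Thus $|\lambda|$, and with it $n$ and every $|a_i|$, is bounded in terms of $S$; only finitely many reduced ordered diagrams of span $S$ are not torus links, so only finitely many such links occur, and together with Step 1 this proves the theorem. The main obstacle is conceptual rather than computational: one must recognize that the bound of Theorem \ref{TheoremAcotando} fails exactly on the locus $r+s\leq 1$, which is precisely the family of links admitting infinitely many pretzel diagrams (the torus links), and dispatch that locus separately before the finiteness count can go through.
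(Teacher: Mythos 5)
Your proof is correct and follows essentially the same route as the paper's: isolate the two-strand torus links (exactly where Theorem \ref{TheoremAcotando} degenerates and infinitely many diagrams represent one link), then combine Theorem \ref{TheoremAcotando} with items \emph{1} and \emph{2} of Theorem \ref{TheoremSpan} to bound $\Sigma$, $z$, $r+s$, the $|a_i|$ and $|\lambda|$, treating the degenerate locus $r+s\leq 1$ by hand just as the paper does (only the constants and the order of the deductions differ). The one harmless slip is that your $z=0$, $r+s\leq 1$ enumeration omits diagrams with $n\geq 3$ entries all equal to $+1$ (or all $-1$), but these are two-strand torus links and are already excluded by the standing hypothesis of your Step 2.
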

\begin{proof}
This follows from Theorem \ref{TheoremAcotando}, items {\it 1} and {\it 2} in Theorem \ref{TheoremSpan} and the fact that the span of $V(L)$ is $|q|+1$ if $L$ is the torus link $T(2,q)$ (see Remark \ref{torus}). Let us see the details of the proof. Let $L$ be an oriented pretzel link, represented by the unoriented   pretzel diagram $P = P(a_1, \ldots, a_n)$, with span$(V(L)) = S$. By Remarks \ref{obs}.3 and \ref{obs}.\ref{obs4}, we may choose this diagram to be reduced. And we may assume that $P$ is under the hypothesis of Theorem~\ref{TheoremSpan}, having in mind that there are $n!$ possible reorderings that could give different pretzel links, although all of them would have the same Jones polynomial, hence the same span.

If $L$ is a torus link with two strands, it is necessarily $T(2, S-1)$ or its mirror image, only two possibilities. Next suppose that $L$ is not a torus link with two strands. Then by Theorem \ref{TheoremAcotando}, and with its notation, 
$$
S = \tn{span}(V(L)) 
\geq \Sigma - M - 4 
\geq 2(r+s-1)-4 
= 2(r+s-3)
$$
from which we deduce that $r+s \leq \frac{S}{2} +3$. Also, by Theorem \ref{TheoremSpan}.1, $z \leq S$.

Taking into account that $r+s$ has an upper bound, the possible value of $\lambda = \alpha - \beta$ has also lower and upper bounds by Theorem \ref{TheoremSpan}.2, assumed that $z=0$ (in Proposition~\ref{PropositionBounds} we will determine specific bounds for $\lambda$). This proves that the number $n$ of entries has necessarily an upper bound as long as $z=0$. 

Assume now that $r+s>1$ (and $z=0$). Then any entry $a_i$ satisfy that $|a_i| \leq S+4$ since, if for example $M = |a_k|$ and $|a_j|>1$, $j\not= k$,~then 
$$
S 
\geq \Sigma - M - 4  
= \sum_{|a_i|>1} |a_i| - |a_k| - 4
= \sum_{|a_i|>1, i\not=k} |a_i| - 4
\geq |a_j| - 4.
$$
And, indeed, $|a_k| \leq |a_j| \leq S+4$.
This leaves a finite number of possibilities for the numbers $a_i$ with $|a_i|>1$ and proves the statement if $r+s>1$.

Consider now the case $r+s =1$ (and $z=0$). By symmetry we may assume $r=1$, $s=0$. If $\lambda \not = -1, 0$, then $r+\lambda \not= 1$ and $s-\lambda \not= 1$, and by Theorem~\ref{TheoremSpan}.2 $S = |a|-\min \{1, r+\lambda, s-\lambda\} +1 \geq |a|$, hence $|a|< S$ where $a$ is the only entry with $|a|>1$. So, a finite number of possibilities. If $\lambda  =-1$, $P=P(a, -1)$ is a torus link. If $\lambda =0$, then $P=P(a)$ is the unknot. 

Finally assume $z>0$. Since $P$ is reduced, $\alpha = \beta =0$. Then by Theorem~\ref{TheoremSpan}.1 we have that $n\leq S$ since 
$$
S = \Sigma +z 
\geq 2(n-z) + z = 2n-z \geq 2n - n = n
$$  
and $|a_i|<S$, since $|a_i| \leq \Sigma < S$. This completes the proof. 
 \end{proof}

We now collect specific upper bounds in the following result. As usual, we consider a reduced pretzel diagram $P(a_1, \dots, a_n)$ with $n$ entries representing a pretzel link whose Jones polynomial (with normalization as in Section \ref{SectionSpanJones}) has span $S$, and let $z$, $r$, $s$ and $\lambda$ be integers as in Notation~\ref{parametros}. 

\begin{proposition} \label{PropositionBounds}
We have that 
$$
z\leq S, \quad r+s\leq \frac{S}{2}+3
\quad \tn{ and } \quad 
|\lambda| \leq \max \{ \frac{S}{2} + 2, S-1\}.
$$ 
Moreover, $|a_i|\leq S+4$ for $i=1, \dots, n$, and 
$$
n \leq \max \{ 2S+5, \frac{5}{2}S+2\}.
$$
\end{proposition}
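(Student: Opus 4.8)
The plan is to harvest all five inequalities from the machinery already in place, since each is essentially a by‑product of the proofs of Theorem~\ref{TheoremAcotando} and Theorem~\ref{TheoremFewCases}. I keep the conventions of Notation~\ref{parametros}: in a reduced diagram $\alpha\beta=0$, and $z>0$ forces $\alpha=\beta=0$, so that $\alpha+\beta=|\lambda|$ and
$$
n=(r+s)+z+|\lambda| .
$$
As in Theorem~\ref{TheoremAcotando} I treat the two–strand torus links separately: by Remark~\ref{torus} they form a single explicit family per span, and their parameters are read off directly, so throughout the entrywise estimates below I assume $L$ is \emph{not} such a link. The first three bounds will hold for every reduced diagram (for torus links they are immediate), while the last two genuinely require this exclusion, as the family $P(N,q-N)=T(2,q)$ already shows.

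The two shape bounds are quick. For $z\le S$: if $z=0$ there is nothing to prove, and if $z>0$ then $\alpha=\beta=0$, so Theorem~\ref{TheoremSpan}.1 gives $S=\Sigma+z\ge z$ because $\Sigma\ge 0$. For $r+s\le \tfrac S2+3$ I reuse the chain from the proof of Theorem~\ref{TheoremFewCases}: each entry with $|a_i|>1$ contributes at least $2$, so $\Sigma\ge 2(r+s)$ and $\Sigma-M\ge 2(r+s-1)$; Theorem~\ref{TheoremAcotando} then yields $S\ge \Sigma-M-4\ge 2(r+s)-6$, and the bound follows (for torus links $r+s\le 2$ is immediate).

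The delicate step, which I expect to be the main obstacle, is $|\lambda|\le\max\{\tfrac S2+2,\,S-1\}$, because it forces me to reconcile the symmetry reduction with the case list of Theorem~\ref{TheoremSpan}. Using the symmetry of Remark~\ref{obs}.\ref{obs2}, which sends $(r,\lambda,s)\mapsto(s,-\lambda,r)$ while fixing both $S$ and $|\lambda|$, I normalize to $\lambda\ge 0$, so $\beta=0$ and $\lambda=\alpha$; the case $z>0$ then gives $\lambda=0$. Assume $z=0$. If $r+\lambda\neq 1$ and $s-\lambda\neq 1$ I am in Theorem~\ref{TheoremSpan}.2, and since $\min\{1,r+\lambda,s-\lambda\}\le s-\lambda$,
$$
S=\Sigma+1-\min\{1,r+\lambda,s-\lambda\}\ge \Sigma+1-(s-\lambda)\ge 2(r+s)-s+\lambda+1\ge \lambda+1 ,
$$
so $\lambda\le S-1$. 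Otherwise I land on one of the two faces of the case list: $r+\lambda=1$ forces $\lambda\le 1$ (as $r\ge 0$), while $s-\lambda=1$ gives $\lambda=s-1\le\tfrac S2+2$ by the $r+s$ bound just proved. Taking the worst case yields the inequality; note the family $P(1,\dots,1)=T(2,S-1)$ shows the term $S-1$ is sharp.

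Finally the two entrywise bounds. For $|a_i|\le S+4$ I follow the estimate in the proof of Theorem~\ref{TheoremFewCases}: when $r+s>1$, writing $M=|a_k|$ and choosing any other $j$ with $|a_j|>1$ gives $S\ge\Sigma-M-4\ge |a_j|-4$, hence $|a_k|\le|a_j|\le S+4$; when $r+s=1$ the single entry with $|a_i|>1$ satisfies $|a_i|\le S$ unless $L$ is a two–strand torus link (which we have excluded — this is where the exclusion is essential), and the $\pm1$ entries are harmless; when $r+s=0$ all entries are $0$ or $\pm1$; and when $z>0$ every nonzero entry obeys $|a_i|\le\Sigma\le S$. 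For the bound on $n$ I substitute the three shape bounds into $n=(r+s)+z+|\lambda|$ and add them without exploiting the constraint $z>0\Rightarrow\lambda=0$; this deliberately loose combination gives
$$
n\le\Big(\tfrac S2+3\Big)+S+\max\Big\{\tfrac S2+2,\,S-1\Big\}=\max\Big\{2S+5,\,\tfrac52 S+2\Big\},
$$
exactly as required.
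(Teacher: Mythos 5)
Your proof is correct and follows essentially the same route as the paper: the bounds on $z$, $r+s$ and $|a_i|$ are harvested from the proof of Theorem~\ref{TheoremFewCases}, the bound on $|\lambda|$ comes from combining item \emph{2} of Theorem~\ref{TheoremSpan} with the bound on $r+s$ (you organize the dichotomy by which item of Theorem~\ref{TheoremSpan} applies, the paper by whether $\lambda>\frac{S}{2}+2$, but the ingredients and the resulting estimates $\lambda\leq S-1$ and $\lambda\leq\frac{S}{2}+2$ are identical), and the bound on $n$ is the same summation $n=(r+s)+z+|\lambda|$. Your explicit flagging that the $|a_i|$ and $n$ bounds require excluding two-strand torus links (witnessed by $P(N,q-N)$) makes precise something the paper's terse proof leaves implicit, and is a welcome clarification rather than a deviation.
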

\begin{proof}
The bounds for $z$, $r+s$ and $|a_i|$ were obtained in the proof of Theorem~\ref{TheoremFewCases}. We now prove that $|\lambda| \leq \max \{ \frac{S}{2} + 2, S-1\}$ (note that the maximum is $S-1$ if $S>5$). If $\lambda=-1, 0$ or $1$ this is obvious. Now assume that $\lambda \geq 2$ (a symmetric argument works if $\lambda \leq -2$). First note that if  $\lambda > \frac{S}{2}+2$ then $s - \lambda < 1$, since
$$
\lambda > \frac{S}{2}+2 = \frac{S}{2}+3-1 \geq r+s-1 \geq s-1.
$$
Moreover, $s-\lambda \not= 1$ and $r+\lambda \not=1$ (since $\lambda \geq 2$), hence by Theorem 
\ref{TheoremSpan}.2 we have $S 
= \Sigma - \min\{ 1, r+\lambda, s-\lambda\}  +1
= \Sigma - (s-\lambda) + 1$ and, since $s-\Sigma \leq 0$, $\lambda = S -1 + (s- \Sigma) \leq S-1$.

We finally check the upper bound for the number $n$ of entries: 
$$
\begin{array}{rcl} 
n
& = & r + s + z + |\lambda| \\
& \leq & \frac{S}{2} + 3 + S + \max\{ \frac{S}{2}+2, S-1\} \, \leq \, \max\{ 2S+5,  \frac{5}{2}S+2 \}.
\end{array}
$$
\end{proof}

Looking closer at the proof of Theorem~\ref{TheoremFewCases}, we actually prove the following corollary.

\begin{corollary} \label{CorollaryFewCases}
\begin{itemize}
\item[a)]  Given a natural number $S$, there are finitely many reduced pretzel diagrams   and not representing a torus link with two strands, such that the Jones polynomials of the pretzel links represented by these diagrams have span $S$.

\item[b)] A pretzel link $L$ has a finite number of reduced pretzel diagrams if and only if $L$ is not a torus link with two strands.
\end{itemize}
\end{corollary}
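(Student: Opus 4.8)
The plan is to extract both statements from the quantitative bounds already obtained in the proof of Theorem~\ref{TheoremFewCases} and collected in Proposition~\ref{PropositionBounds}, the crucial observation being that those bounds constrain the \emph{combinatorial data of a reduced diagram}, not merely the link type it represents. For part~(a), suppose $P(a_1,\dots,a_n)$ is a reduced pretzel diagram of span $S$ that does not represent a two-strand torus link. Then Theorem~\ref{TheoremAcotando} applies, and exactly as in the proof of Theorem~\ref{TheoremFewCases} one obtains $n\leq \max\{2S+5,\frac{5}{2}S+2\}$ together with $|a_i|\leq S+4$ for every $i$ (these are the bounds recorded in Proposition~\ref{PropositionBounds}). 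Since a reduced pretzel diagram is determined by the tuple $(a_1,\dots,a_n)$, and there are only finitely many tuples of bounded length with entries of bounded absolute value, only finitely many such diagrams can occur. This proves~(a).

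For the forward implication of part~(b) I would use that the span of the Jones polynomial is a link invariant. If $L$ is not a two-strand torus link, put $S=\tn{span}(V(L))$. Every reduced pretzel diagram of $L$ represents $L$, hence does not represent a two-strand torus link and has span $S$; by part~(a) there are only finitely many diagrams with these two properties, so $L$ admits only finitely many reduced pretzel diagrams. For the converse, assume $L=T(2,q)$ and exhibit an infinite family. By Remark~\ref{torus} we have $P(a,q-a)\sim P(0,q)=T(2,q)$ for every integer $a$, so each such diagram represents $L$. Choosing $a\geq \max\{2,q+2\}$ forces $a\geq 2$ and $q-a\leq -2$, so both entries have absolute value greater than one; then $\alpha=\beta=z=0$ and the diagram is reduced, with no cancellations of the type described in Remarks~\ref{obs}.\ref{obs3} and~\ref{obs}.\ref{obs4}. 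These diagrams are pairwise distinct as $a$ grows, giving infinitely many reduced pretzel diagrams of $L$.

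The step I expect to carry the real weight is the one isolating \emph{diagrams} from \emph{links} in part~(a): Theorem~\ref{TheoremFewCases} is phrased in terms of links, so I must make sure its proof genuinely bounds $(n,|a_i|)$ and not just the isotopy class of $L$. This is indeed the case, precisely because the torus-link mechanism of Remark~\ref{torus} is the only source of unbounded entries, and it is exactly what the hypothesis ``not a two-strand torus link'' removes in order to invoke Theorem~\ref{TheoremAcotando}. Once this is pinned down, both directions of~(b) are routine: the forward direction is a finiteness statement inherited from~(a) via the invariance of the span, and the converse is the explicit torus family above. The only points requiring care are verifying that the exhibited family indeed lands in the reduced, non-cancelling regime, and that the bounds of Proposition~\ref{PropositionBounds} are applied only where ``not a two-strand torus link'' has been assumed.
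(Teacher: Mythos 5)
Your proposal is correct and follows essentially the same route as the paper, which states the corollary as a direct consequence of the proof of Theorem~\ref{TheoremFewCases}: the bounds $n\leq\max\{2S+5,\tfrac{5}{2}S+2\}$ and $|a_i|\leq S+4$ recorded in Proposition~\ref{PropositionBounds} already constrain the tuple $(a_1,\dots,a_n)$ itself, giving part~(a), and the family $P(a,q-a)\sim T(2,q)$ of Remark~\ref{torus} gives the converse of part~(b). Your write-up merely makes explicit what the paper leaves implicit, including the useful check that the exhibited torus family lies in the reduced regime.
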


\section{Pretzel knots up to nine  crossings}\label{SectionList}

In this section we provide the complete list of pretzel knots up to nine crossings. By abuse of language we will say that a link $L$ has span $S$ if its Jones polynomial $V(L)$, normalized as in Section \ref{SectionSpanJones}, has span $S$ (note that the span is not affected by the orientation of the link); also, a pretzel diagram $P=P(a_1, \dots, a_n)$ has span $S$ if the  link $L$ defined by $P$ has span $S$. In the same line we will speak about $V(L)$ as simply the Jones polynomial of the pretzel diagram $P$. 

\begin{theorem}\label{TheoremUpToEightCrossings}
The following table gives the information about pretzel and non-pretzel knots up to 8 crossings. For the pretzel ones, we provide a pretzel diagram. In particular, the first non-pretzel knot in tables is the knot $8_{12}$.
\end{theorem}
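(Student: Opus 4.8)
The plan is to build the table in two independent directions: a \emph{positive} direction exhibiting an explicit pretzel diagram for every knot of at most eight crossings that is asserted to be pretzel, and a \emph{negative} direction certifying that the remaining knots (the first being $8_{12}$) admit no pretzel diagram whatsoever. The single computational engine behind both directions is Theorem~\ref{TheoremMain}: its closed formula lets us compute the Kauffman bracket, and hence (after the substitution $A=t^{-1/4}$ and the writhe correction) the Jones polynomial $V$, of any pretzel diagram $P(a_1,\dots,a_n)$ directly from its entries.

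For the positive direction I would, for each knot $K$ marked pretzel, write down a candidate diagram $P(a_1,\dots,a_n)$, compute its Jones polynomial by Theorem~\ref{TheoremMain}, and match it against the tabulated invariant of $K$. Since a few knots in this range may be Jones-equivalent, whenever the Jones polynomial alone does not single out $K$ I would confirm the identification with an auxiliary invariant (Alexander polynomial, signature) or by an explicit isotopy, using the simplifications of Remark~\ref{obs} — cancellation of $(1,-1)$ pairs, absorption of a $\pm1$ entry adjacent to a $0$, and the move $P(1,-2,\dots)\sim P(2,\dots)$ — to bring each diagram into recognizable form. This is a finite, diagram-by-diagram verification.

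The crux is the negative direction, which I would carry out for $8_{12}$ as the prototype. Being alternating with eight crossings, $8_{12}$ realizes the bound $\tn{span}(V)\le n+1$ recalled in Section~\ref{SectionFewCases} (Kauffman--Murasugi--Thistlethwaite), so in our normalization its span is $S=9$ (one may equally read $S$ off the tabulated $V(8_{12})$). If $8_{12}$ were pretzel it would possess a reduced pretzel diagram of span exactly $9$; hence it suffices to enumerate \emph{all} reduced pretzel diagrams of span $9$ and check that none of them is $8_{12}$. Finiteness of this list is guaranteed by Theorem~\ref{TheoremFewCases}, and Proposition~\ref{PropositionBounds} makes it explicit: with $S=9$ we have $z\le 9$, $r+s\le 7$, $|\lambda|\le 8$, $|a_i|\le 13$ and $n\le 24$. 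Traversing the case division of Theorem~\ref{TheoremSpan} subject to these bounds produces the finite catalogue of span-$9$ diagrams; for each I would compute $V$ by Theorem~\ref{TheoremMain} and compare it with $V(8_{12})$. A convenient first filter is that $8_{12}$, being a knot, has $V(8_{12})\in\mathbb{Z}[t^{\pm1}]$, so every candidate with an even number of components (whose Jones polynomial carries half-integer powers of $t$) is discarded at once; the torus links $T(2,\pm 8)$ are likewise excluded since $8_{12}\neq T(2,n)$. One then finds that no span-$9$ pretzel diagram reproduces $V(8_{12})$, so $8_{12}$ is not pretzel. Applying the identical procedure to each remaining eight-crossing knot settles its pretzel status and shows in particular that $8_{12}$ is the first non-pretzel entry in the tables.

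The main obstacle I anticipate is not conceptual but organizational: the bounds of Proposition~\ref{PropositionBounds} leave a large (though explicitly finite) search space, and the enumeration must faithfully cover every branch of the long case analysis of Theorem~\ref{TheoremSpan}, so that no span-$9$ diagram is overlooked. The second delicate point is to ensure that the Jones polynomial, reinforced by auxiliary invariants where it degenerates, is sharp enough both to confirm each positive identification and to exclude $8_{12}$ with genuine rigor, rather than merely up to Jones-equivalence.
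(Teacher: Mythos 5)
Your proposal is correct and follows essentially the same route as the paper: the authors likewise enumerate a finite complete set of reduced pretzel diagrams with the relevant span (span $9$ for the alternating eight-crossing knots, built exactly as in Lemma~\ref{LemmaSpanTen} via the case division of Theorem~\ref{TheoremSpan} and the finiteness results of Section~\ref{SectionFewCases}), compute their Jones polynomials with Theorem~\ref{TheoremMain}, discard multi-component candidates, and then either conclude non-pretzelness from an empty match list or confirm a positive match by an explicit isotopy. Your parity filter and the identification of $S=9$ for the alternating case via the Kauffman--Murasugi--Thistlethwaite bound are both consistent with what the paper does.
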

$$
\begin{array}{l}
3_1 = P(1,1,1)\\ 
4_1=P(1,1,2)\\ 
5_1 =P(1,1,1,1,1)\\ 
5_2 =P(1,1,3)\\
6_1 =P(1,1,4)\\
6_2 =P(1,2,3)\\
6_3 =P(2,1,-3,1)\\
7_1 =P(1,1,1,1,1,1,1)\\
7_2 =P(5,1,1)\\
7_3 =P(1,1,1,4)\\
7_4 =P(3,1,3)\\
7_5 =P(2,1,1,3)\\
7_6 =P(2,1,1,-3,1)\\
7_7 =P(1,1,1,-3,-3)\\   
\end{array}
\qquad
\begin{array}{l}
8_1 = P(1,1,6)\\
8_2 = P(1,2,5)\\
8_3 = P(1,1,1,1,4)\\
8_4 = P(1,3,4)\\
8_5 = P(2,3,3)\\
8_6 = P(1,1,1,2,3)\\
8_7 = P(4,1,-3,1)\\   
8_8 = P(2,1,1,1,-3,1)\\  
8_9 = P(3,1,-4,1)\\  
8_{10} = P(-3,-2,3,-1)\\  
8_{11} = P(3,1,1,-3,1)\\    
8_{12} \tn{ is not pretzel} \\   
8_{13} = P(1,1,1,-3,-4)\\  
8_{14} \tn{ is not pretzel} \\
8_{15} = P(-3,1,2,1,-3) \\   
8_{16} \tn{ is not pretzel} \\
8_{17} \tn{ is not pretzel} \\
8_{18} \tn{ is not pretzel} \\
8_{19} = P(3,3,-2)\\   
8_{20} = P(-3,2,3,-1)\\    
8_{21} = P(-3,-3,1,2) 
\end{array}
$$
Some of the above knots were known to be pretzel (see for example tables in \cite{Peter} or \cite{KnotAtlas}), and other have been recently matched as pretzel in \cite{Clara}. That the above list is now exhaustive can be deduced  exactly in the same way as in the proof of Theorem~\ref{TheoremNineCrossings} below for knots with nine crossings.

As an example, the Jones polynomial of the non-alternating knot with eight crossings $8_{21}$ is $V_1(8_{21})=	2t - 2t^2 + 3t^3 -3t^4 + 2t^5 -2t^6 + t^7$ hence $\tn{span}(V(8_{21})) = 6+1 = 7$. Then we list all the pretzel links whose Jones polynomial have span equal to $7$ and observe that the Jones polynomial of $P(3,3,-1,-2)$ coincides with that of $8_{21}$. Although there is no such coincidences for small knots, we check by hand that both knots are the same (see Figure~\ref{OVU}). 
\begin{figure}[ht!]
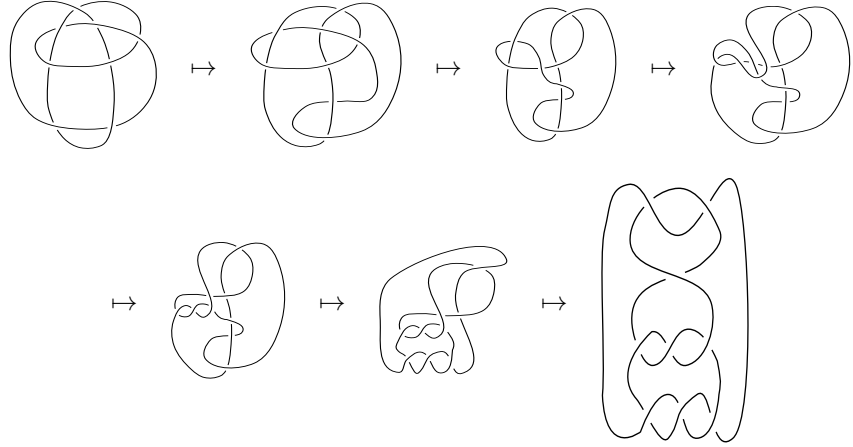
  
\begin{center}
$$
\OVUpretzelUno 
\quad \mapsto \quad
\OVUpretzelDos
\quad \mapsto \quad
\OVUpretzelTres
\quad \mapsto \quad
\OVUpretzelCuatro
$$
$$
\mapsto \quad
\OVUpretzelCinco
\quad \mapsto \quad
\OVUpretzelSeis
\quad \mapsto \quad
\OVUpretzelSiete
$$
\end{center}
\caption{The knot $8_{21}$ is the pretzel knot $P(2, 1, -3, -3)$.}
\label{OVU} 
\end{figure}

\begin{theorem}\label{TheoremNineCrossings}
The following table gives the information about pretzel and non-pretzel knots with 9 crossings. For the pretzel ones, we provide a pretzel diagram. 
In particular the first non-alternating and non-pretzel knot is the knot $9_{42}$. In fact, all the non-alternating knots with nine crossings are non-pretzel except $9_{46}$ and $9_{48}$.
\end{theorem}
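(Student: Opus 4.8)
The plan is to treat the $49$ knots $9_1,\dots,9_{49}$ one at a time, reducing each question of whether $9_k$ is pretzel to a finite search driven by the span $S=\tn{span}(V(9_k))$ of its Jones polynomial, exactly as in the eight-crossing case. First I would record $S$ for every $k$. By the Kauffman--Murasugi--Thistlethwaite theorem a reduced alternating diagram with $c$ crossings has $\tn{span}(V_1)=c$, so with the normalization of Section~\ref{SectionSpanJones}, for which $\tn{span}(V)=1+\tn{span}(V_1)$, the $41$ alternating knots $9_1,\dots,9_{41}$ all have $S=10$, whereas the eight non-alternating knots $9_{42},\dots,9_{49}$ have $S<10$, read directly off their tabulated Jones polynomials.

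With $S$ fixed, Theorem~\ref{TheoremFewCases} ensures that only finitely many pretzel links have span $S$, and Proposition~\ref{PropositionBounds} makes the search effective through the bounds $z\le S$, $r+s\le\frac{S}{2}+3$, $|\lambda|\le\max\{\frac{S}{2}+2,\,S-1\}$, $|a_i|\le S+4$ and $n\le\max\{2S+5,\frac{5}{2}S+2\}$. I would enumerate all reduced pretzel diagrams obeying these bounds, set aside the torus links $T(2,q)$ as the unique infinite family (Remark~\ref{torus}), and use Theorem~\ref{TheoremSpan} both to organize the list by the triple $(r,\lambda,s)$ and to discard diagrams whose actual span is not $S$. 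For each surviving diagram $P(a_1,\dots,a_n)$ I would then compute its Jones polynomial from the closed Kauffman-bracket formula of Theorem~\ref{TheoremMain}, via the substitution $A=t^{-1/4}$ together with the writhe correction.

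The two conclusions are then established asymmetrically. To show that $9_k$ is \emph{not} pretzel it is enough to verify that $V(9_k)$ matches no member of the finite span-$S$ list; since the Jones polynomial is a link invariant, a single mismatch suffices. This is how I would prove that $9_{42}$ is non-pretzel, the first non-alternating knot for which this happens, and, proceeding through $9_{43},\dots,9_{49}$, that every non-alternating nine-crossing knot is non-pretzel except $9_{46}$ and $9_{48}$. To show that $9_k$ \emph{is} pretzel I would, conversely, select a diagram $P(a_1,\dots,a_n)$ from the matching list and confirm that it genuinely represents $9_k$; here the Jones coincidence merely pinpoints the candidate.

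The main obstacle is precisely this last verification, because the Jones polynomial is not a complete invariant: a coincidence between the Jones polynomial of a tabulated knot and that of a pretzel candidate is only suggestive, and the identification must be clinched either by an explicit sequence of Reidemeister moves (in the spirit of the $8_{21}$ computation in Figure~\ref{OVU}) or by comparing further invariants such as the Alexander polynomial, the signature, or the hyperbolic volume. The span-$10$ enumeration is moreover sizeable, the number of candidate diagrams growing quickly with $S$; disciplined bookkeeping, pruning by $(r,\lambda,s)$ and by $\Sigma$ through the detailed cases of Theorem~\ref{TheoremSpan}, is what keeps the search finite and, for these knots, manageable.
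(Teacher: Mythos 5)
Your proposal is correct and follows essentially the same route as the paper: establish $S=10$ for the alternating knots (and $S<10$ for the non-alternating ones), build a finite complete list of reduced pretzel diagrams with that span via Theorem~\ref{TheoremSpan} (the paper packages this as Lemma~\ref{LemmaSpanTen} rather than brute-forcing the bounds of Proposition~\ref{PropositionBounds} and filtering, but that is an implementation detail), compare Jones polynomials computed from Theorem~\ref{TheoremMain} up to mirror image and reordering, and settle the positive matches by explicit Reidemeister moves. You correctly identify the one genuine logical asymmetry --- an empty match list proves non-pretzel, while a match only suggests pretzel and must be clinched by hand --- which is exactly how the paper argues.
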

$$
\begin{array}{l}
9_1=P(1,1,1,1,1,1,1,1,1) \\ 
9_2=P(1,1,7) \\ 
9_3=P(1,-4,5)  \\ 
9_4=P(1,-5,4) \\ 
9_5=P(1,3,5) \\  
9_6=P(1,1,-3,-6) \\ 
9_7=P(-3,-1,-1,-1,-1,-2) \\
9_8=P(2,1,1,1,1,-3,1) \\
9_9=P(-4,-1,-1,-3) \\
9_{10}=P(-3,-1,-1,-1,-3)  \\ 
9_{11}=P(5,-2,-1,-1,-1) \\
9_{12}=P(4,1,1,-3,1) \\
9_{13}=P(3,1,1,-4,1) \\
9_{14}=P(1,1,1,-3,-5)  \\ 
9_{15} \hbox{  is not pretzel}\\
9_{16}=P(2,3,1,3) \\ 
9_{17}=P(-3,1,1,1,1,1,-3) \\ 
9_{18}\hbox{  is not pretzel} \\
9_{19} \hbox{  is not pretzel}\\
9_{20}=P(1,1,1,1,-3,-4) \\ 
9_{21} \hbox{  is not pretzel}\\
9_{22} \hbox{  is not pretzel} \\
9_{23} \hbox{  is not pretzel}\\
9_{24}=P(3,1,-3,1,2) \\ 
9_{25} \hbox{  is not pretzel}\\
\end{array}
\qquad
\begin{array}{l}
9_{26} \hbox{  is not pretzel}\\
9_{27} \hbox{  is not pretzel}\\
9_{28}=P(1,1,-3,2,1,-3) \\ 
9_{29} \hbox{  is not pretzel}\\
9_{30} \hbox{  is not pretzel}\\
9_{31} \hbox{  is not pretzel}\\
9_{32} \hbox{  is not pretzel}\\
9_{33} \hbox{  is not pretzel}\\
9_{34} \hbox{  is not pretzel}\\
9_{35}=P(3,3,3)\\ 
9_{36} \hbox{  is not pretzel}\\
9_{37}=P(1,-3,3,-3,1) \\ 
9_{38} \hbox{  is not pretzel}\\
9_{39} \hbox{  is not pretzel}\\
9_{40} \hbox{  is not pretzel}\\
9_{41} \hbox{  is not pretzel}\\
9_{42} \hbox{  is not pretzel} \\
9_{43} \hbox{  is not pretzel} \\
9_{44} \hbox{  is not pretzel} \\
9_{45} \hbox{  is not pretzel}\\
9_{46} =P(3,3,-3 ) \\ 
9_{47} \hbox{  is not pretzel}\\
9_{48} =P(-3,1,-3,1,-3) \\  
9_{49} \hbox{  is not pretzel}\\
\end{array}
$$

The following definition will be useful in the proof. Fix an integer $S$. A set~${\cal L}_S$ of pretzel  diagrams with span $S$ is said to be {\it complete} if any pretzel knot~$K$ with span~$S$ has a diagram which can be obtained from a diagram in ${\cal L}_S$ by mirror image and/or by reordering its entries. Results in Section~\ref{SectionFewCases} (precisely Theorem~\ref{TheoremFewCases} and Corollary~\ref{CorollaryFewCases}) guaranty that there exists a {\it finite} complete set~${\cal L}_S$ for each natural number~$S$. 
In Lemma \ref{LemmaSpanTen} we will find a finite complete set ${\cal L}_S$ for $S=10$, which is the key ingredient to prove Theorem \ref{TheoremNineCrossings}.

\begin{proof}
The Jones polynomial of the alternating knots $9_1$ to $9_{41}$ have indeed span $S=10$. Let ${\cal L}_{10}$ be the complete set of pretzel diagrams with span $10$ determined in Lemma~\ref{LemmaSpanTen} below. We compute their Jones polynomials by using Theorem~\ref{TheoremMain} and with the help of Sage.
Now, let $K$ be an alternating knot with 9 crossings. Let ${\cal L}_{10}(K)$ be the subset of $ {\cal L}_{10}$ consisting on those pretzel diagrams with Jones polynomial equal or symmetric to $V(K)$. If ${\cal L}_{10}(K)$ is empty we can confirm that $K$ is not pretzel. 
Otherwise $K$ could be still non-pretzel, but if it is pretzel, must have a diagram that can be obtained from a diagram $P=P(a_1, \dots, a_n) \in {\cal L}_{10}(K)$ by reordering its entries ($K$ would be a mutant of the knot with diagram $P$) and/or by taking its mirror image. In all the cases in which ${\cal L}_{10}(K)$ was non-empty we directly checked by hand, using Reidemeister moves, that $K$ was pretzel (since they are in general small knots with the same Jones polynomial, this was quite expected). 

For non-alternating knots $9_{42}$ to $9_{49}$ we have repeated the same strategy, but in these cases the considered set ${\cal L}_S$ corresponds to a span $S<10$ (for $S<10$ the set ${\cal L}_S$ is even smaller and can be found in the same way as in Lemma~\ref{LemmaSpanTen}). This completes the proof.  
\end{proof}

\begin{lemma} \label{LemmaSpanTen}
The following set ${\cal L}_{10}$ of reduced pretzel knot diagrams with span $S=10$ is complete (``Type" indicates the corresponding item of Theorem~\ref{TheoremSpan} satisfied; a pretzel diagram $P(a_1, \dots, a_n)$ will be shorted just by writing $(a_1, \dots, a_n)$):
\begin{itemize}
\item Type 1: 

$
\begin{array}{l}
(3,3,3,0), (3,3,0,-3), (9,0).
\end{array}
$

\item Type 2:

$
\begin{array}{l}
(7,2), (7,1,-2), (2,1,-7), (1,1,-2,-7),  \\
(3,3,3), (3,1,1,-3,-3),  \\
(6,3), (6,1,-3), (3,1,-6), (1,1,-3,-6),  \\
(5,4), (5,1,-4), (4,1,-5), (1,1,-4,-5),  \\
(3,3,2,1), (3,3,1,1,-2), (3,2,1,1,-3), \\ 
(3,1,1,1,-2,-3), (2,1,1,1,-3,-3), (1,1,1,1,-2,-3,-3), \\
(5,3,1), (1,1,1,-3,-5), \\
(8,1), (1,1,-8), \\
(5,2,1,1), (5,1,1,1,-2), (2,1,1,1,-5), (1,1,1,1,-2,-5), \\
(4,3,1,1), (4,1,1,1,-3), (3,1,1,1,-4), (1,1,1,1,-3,-4), \\
(7,1,1), \\
(3,3,1,1,1), (1,1,1,1,1,-3,-3), \\
(6,1,1,1), (1,1,1,1,-6), \\
(3,2,1,1,1,1), (3,1,1,1,1,1,-2), (2,1,1,1,1,1,-3), (1,1,1,1,1,1,-2,-3), \\
(5,1,1,1,1), \\
(4,1,1,1,1,1), (1,1,1,1,1,1,-4), \\
(3,1,1,1,1,1,1), \\
(2,1,1,1,1,1,1,1), (1,1,1,1,1,1,1,1,-2), \\
(1,1,1,1,1,1,1,1,1).
\end{array}
$

\item Type 3:

$
\begin{array}{l}
(3,3,3,2,-1,-1,-1), (3,3,3,-1,-1,-2), (3,3,2,-1,-1,-3),\\ 
(3,3,-1,-2,-3), (3,2,-1,-3,-3), \\
(5,3,3,-1,-1), \\
(11,2,-1), \\
(9,3,-1), \\
(7,5,-1).
\end{array}
$

\item Type 5:

$
\begin{array}{l}
(3, -3 ,-6), (6, -3,-3), \\
(3, -2, -3, -3), \\
(4, -3, -5), (5, -3 ,-4), \\
(2, -5, -5), (3, -5, -5), \\ 
(4,-5,-5), \\
(3, -4, -7), \\
(5, -6, -7).
\end{array}
$

\item Type 6: 

$
\begin{array}{l} 
(1,-3,-9), (1,-5,-7).
\end{array}
$
\end{itemize}
\end{lemma}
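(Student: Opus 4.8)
The plan is to realize $\mathcal{L}_{10}$ as the explicit output of the finite enumeration whose existence is guaranteed by Theorem~\ref{TheoremFewCases} and Corollary~\ref{CorollaryFewCases}, and to check that this enumeration produces precisely the displayed diagrams. Every pretzel knot of span $10$ admits a reduced diagram (Remarks~\ref{obs}.\ref{obs3} and \ref{obs}.\ref{obs4}) which, after reordering its entries so that $a_1 \geq \cdots \geq a_n$ (Remark~\ref{obs}.\ref{obs1}) and possibly passing to its mirror image (Remark~\ref{obs}.\ref{obs2}), satisfies the hypotheses of Theorem~\ref{TheoremSpan}. Hence it suffices to list, up to reordering and mirror image, every reduced pretzel diagram that represents a \emph{knot} and whose span is exactly $10$, together with a single representative of each two-strand torus knot of span $10$.

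First I would instantiate Proposition~\ref{PropositionBounds} at $S=10$, obtaining $z \leq 10$, $r+s \leq 8$, $|\lambda| \leq 9$, $|a_i| \leq 14$ and $n \leq 27$; this confines the search to a finite set of sign-constrained integer tuples. I would also record the single-component criterion used to isolate knots from the many link diagrams of the same span: a pretzel diagram $P(a_1,\dots,a_n)$ represents a knot exactly when either all its entries are odd and $n$ is odd, or exactly one entry is even. Applying this filter is what separates the listed diagrams (all knots) from the links that must be discarded, and I would verify it against the tables (for instance $(3,3,3)$ and $(3,3,-3)$ are knots because $n=3$ is odd, whereas $(1,-5,-7)$ is a knot because it has exactly one even entry).

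Next I would march through the items of Theorem~\ref{TheoremSpan}, solving the span equation $S=10$ inside each, which simultaneously certifies that each surviving tuple really does have span $10$. For item~$1$ this reads $\Sigma+z=10$ and yields the Type~$1$ entries; for the generic item~$2$ it reads $\Sigma-\min\{1,r+\lambda,s-\lambda\}=9$, whose three alternatives according to which argument attains the minimum (note $\min$ may be negative, as for $(8,1)$ where $s-\lambda=-1$) must be treated separately and account for the bulk of the Type~$2$ list; for items~$3$, $5$, $6$ and $7$ the corresponding closed formulas are solved, using throughout that the symmetry $(r,\lambda,s)\mapsto(s,-\lambda,r)$ lets me work on the slice $r+\lambda=1$ exactly as the theorem is organized. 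Within each resulting value of $\Sigma$ the entries range over a bounded partition problem, which I would enumerate by hand for the short cases and with Sage for the longer ones, then keep only the knots and collapse the mirror/reorder redundancy. I expect item~$4$ and the two-strand torus possibilities to contribute nothing new except the torus knot $T(2,\pm 9)$: since such a knot has infinitely many reduced diagrams (Remark~\ref{torus}) it escapes the finite search, but it is already represented by a two-entry diagram such as $(8,1)$ or by $(9,0)$, so a single representative suffices.

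The main obstacle will be the exhaustiveness and bookkeeping of item~$5$ (the difficult case $r=1$, $s>1$), together with its exceptional sub-cases $P(2,-3,-4)$ and the family $P(2,-3,-4,a_4)$ with $a_4<-6$, where the span formula jumps and one must check carefully whether the value $10$ is attained; the same vigilance is needed for the non-generic sub-cases of items~$3$ and $7$. Once every item has been closed off and the knot/mirror/reorder filters applied, comparing the surviving tuples with the displayed list term by term finishes the proof. Because completeness only requires that every span-$10$ pretzel \emph{knot} be covered (the list need not be irredundant), leaving several reduced diagrams of a single knot—for example the realizations $(9,0)$, $(8,1)$, $(7,2)$, $(6,3)$, $(5,4)$ and $(1,1,1,1,1,1,1,1,1)$ all of $9_1=T(2,9)$—is harmless and does not affect the conclusion.
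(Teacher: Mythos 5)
Your proposal follows essentially the same route as the paper's proof: march through the items of Theorem~\ref{TheoremSpan}, solve the span equation $S=10$ within each item as a bounded partition problem for $\Sigma$, keep only the diagrams representing knots (at most one even entry, and not all entries odd with $n$ even), collapse mirror/reordering redundancy, and handle item~\emph{4}, item~\emph{7} and the two-strand torus knots by noting they are already represented elsewhere in the list. One cosmetic slip: $(1,-5,-7)$ has no even entry at all, so it is a knot because all its entries are odd and $n=3$ is odd, not because it has exactly one even entry.
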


\begin{proof} We say that a pretzel diagram $P = P(a_1, \dots , a_n)$ is of type {\it i.j} if it satisfies the hypothesis of Theorem \ref{TheoremSpan}, item {\it i.j}. In particular, we will say that $P$ is generic if it is of type {\it 2}; otherwise is called non-generic. Recall that all these diagrams are reduced pretzel diagrams and in addition $a_1\geq a_2 \geq \dots \geq a_n$. We will run along all the items of Theorem \ref{TheoremSpan}. For each one, $i.j$, we will find the pretzel {\it knot} diagrams of type $i.j$ with span equal to $S = 10$, discarding a diagram if it can be obtained by mirror image or reordering  the entries of a previous selected one. Another important remark that will greatly reduce the list (and our effort) is that we will avoid to take pretzel diagrams with more than one even entry, since they correspond to links with two or more components. For the same reason, from the final list we will also delete the pretzel diagrams with $n$ even and all entries $a_i$ odd. We will use notation $r, s, z, \lambda$ and $\Sigma$ as in Theorem~\ref{TheoremSpan}. 
\begin{itemize}
\item If $P$ is of type 1 then $z>0$ and $S=\Sigma + z$. Since we want $P$ to represent a knot, necessarily $z=1$, hence $\Sigma =9$. On the other hand,  since $P$ is reduced, it is $\lambda=0$.
 Then the decompositions of $\Sigma =9$ as sum of positive numbers greater than 1 and none of them even ($0$ is already one even entry) are $3+3+3$ and $9$. 
We then need to assign different signs to the addends in order to obtain the different possibilities (in the rest of proof we will refer to this process as {\it assigning signs}), although avoiding to have diagrams that differ just by mirror image and reordering:
$$ 
(3, 3, 3, 0), (3, 3, 0, -3), (9, 0).
$$
\end{itemize}

Note that $z=0$ in the rest of items in Theorem~\ref{TheoremSpan}. Also, along the rest of proof, a {\it decomposition} of an integer $\Sigma \geq 2$ will mean a non-decreasing sequence of integers greater than one, at most one of them even, whose sum is exactly $\Sigma$. 

\begin{itemize}
\item Suppose that $P$ is of type 2, i.e., it is generic. 
Then $r+\lambda \neq 1$, $s-\lambda \neq 1$ and 
$S = \Sigma - \min \{1,r + \lambda ,s - \lambda \} + 1\geq \Sigma -1+1 =\Sigma$.
As $S=10$, the possible values of $\Sigma$ are $10,9,8,7,6,5,4,3,2$ and $0$. Let $m=\min \{1,r + \lambda ,s - \lambda \}$.
\begin{itemize}
\item If $\Sigma=10$ then $m=1$ and, since $r + \lambda \neq 1$ and $ s - \lambda \neq 1$, it must be $r+\lambda\geq 2$ and $s-\lambda\geq2$, thus $r+s\geq 4$. On the other hand, the maximum number of addends in a decomposition of $\Sigma=10$ occurs at $10=2+3+5$, which means that $r+s \leq 3$, and we conclude that no~$P$ satisfies this case. 
\end{itemize}

When $\Sigma\leq 9$, $m<1$ hence $m=r+\lambda$ or $m=s-\lambda$. One can also easily check that if $P$ is generic and $m=r+\lambda$, then its mirror image is also generic and satisfies that $m=s-\lambda$. Thus, for $\Sigma\leq 9$, we can assume that  $m=s-\lambda$ and therefore $r+\lambda\geq s-\lambda$.

\begin{itemize}
\item If $\Sigma=9$ then $m=s-\lambda=0$. Since $r+\lambda\not=1$ it follows that $r+s\not=1$ and the admissible decompositions of $\Sigma = 9$ are $2+7$, $3+3+3$, $3+6$ and $4+5$. Assigning signs to the addends and, since $\lambda=s$, adding as many 1's as negative addends, we obtain
$$
\begin{array}{l}
(7,2), (7,1,-2), (2,1,-7), (1,1,-2,-7) \\
(3,3,3), (3,3,1,-3), (3,1,1,-3,-3), (1,1,1,-3,-3,-3) \\
(6,3), (6,1,-3), (3,1,-6), (1,1,-3,-6),\\ 
(5,4), (5,1,-4), (4,1,-5), (1,1,-4,-5).
\end{array}
$$
\end{itemize}

For $0<\Sigma\leq 8$ we have $m=s-\lambda\leq -1$, hence $\lambda\geq s+1$, and the relations $r+\lambda\not=1,r+\lambda\geq -1$ are automatic. 

\begin{itemize}
\item If $\Sigma=8$, then $s-\lambda=-1$. Decompositions of $\Sigma =8$ are $2+3+3$, $3+5$ and $8$. Assigning signs and adding $\lambda=s+1$ entries equal to 1, we obtain 
$$
\begin{array}{l}
(3,3,2,1), (3,3,1,1,-2), (3,2,1,1,-3),\\
(3,1,1,1,-2,-3), (2,1,1,1,-3,-3), (1,1,1,1,-2,-3,-3),\\
(5,3,1), (5,1,1,-3), (3,1,1,-5), (1,1,1,-3,-5),\\
(8,1), (1,1,-8) .
\end{array}
$$

\item If $\Sigma=7$ then $s-\lambda=-2$. Decompositions of $7$ are $2+5$, $3+4$ and $7$. 
Assigning signs and adding $\lambda=s+2$ entries equal to 1, we obtain
$$
\begin{array}{l}
(5,2,1,1), (5,1,1,1,-2),(2,1,1,1,-5), (1,1,1,1,-2,-5),\\
(4,3,1,1), (4,1,1,1,-3), (3,1,1,1,-4), (1,1,1,1,-3,-4),\\
(7,1,1), (1,1,1,-7).
\end{array}
$$

\item If $\Sigma=6$ then $s-\lambda=-3$. Decompositions of $6$ are
$3+3$ and $6$. Assigning signs and adding $\lambda=s+3$ entries equal to 1, we obtain
$$
\begin{array}{l}
(3,3,1,1,1), (3,1,1,1,1,-3), (1,1,1,1,1,-3,-3),\\
(6,1,1,1), (1,1,1,1,-6).
\end{array}
$$

\item If $\Sigma=5$ then $s-\lambda=-4$. Decompositions of $5$ are $2+3$ and $5$.
Assigning signs and adding $\lambda=s+4$ entries equal to 1, we obtain
$$
\begin{array}{l}
(3,2,1,1,1,1), (3,1,1,1,1,1,-2), (2,1,1,1,1,1,-3), (1,1,1,1,1,1,-2,-3), \\
(5,1,1,1,1), (1,1,1,1,1,-5).
\end{array}
$$

\item If $\Sigma=4$ then $s-\lambda=-5$. The only decomposition of $4$ is $4$. Assigning signs and adding $\lambda=s+5$ entries equal to 1, we obtain
$$
\begin{array}{l}
(4,1,1,1,1,1), (1,1,1,1,1,1,-4).
\end{array}
$$

\item Similarly, for $\Sigma=3$ and $\Sigma=2$ we obtain the lists
$$
\begin{array}{l}
(3,1,1,1,1,1,1), (1,1,1,1,1,1,1,-3), \\
(2,1,1,1,1,1,1,1), (1,1,1,1,1,1,1,1,-2).
\end{array}
$$

\item Finally, for $\Sigma=0$ we have $s-\lambda=-9$. In this case $r=s=0$ and we obtain the pretzel diagram
$$
(1,1,1,1,1,1,1,1,1).
$$
\end{itemize}

\item If $P$ is of type 3.1 then $r+\lambda=1$, $r>1$ and $(r,\lambda,s)\not=(2,-1,0)$, thus $r+s\geq 3$ and $S=\Sigma-1$. The decompositions of $\Sigma=S+1=11$ with at least $3$ numbers are $2+3+3+3$ and $3+3+5$. Assigning signs taking into account that $r>1$ and adding $r-1$ entries equal to $-1$, we obtain
$$
\begin{array}{l}
(3,3,3,2,-1,-1,-1), (3,3,3,-1,-1,-2), (3,3,2,-1,-1,-3),\\
(3,3,-1,-2,-3), (3,2,-1,-3,-3),\\
(5,3,3,-1,-1), (5,3,-1,-3), (3,3,-1,-5).
\end{array}
$$

Pretzel diagrams of types $3.2$ and $3.3$ have span equal to 2 and 1, respectively, so they do not belong to $\mathcal L_{10}$.

\item If $P$ is of type 3.4 then $(r,\lambda,s)=(2,-1,0)$, $a_2=2$, $a_1>3$ and $S=a_1-1$. The only diagram is
$$
(11,2,-1).
$$

\item If $P$ is of type 3.5 then $(r,\lambda,s)=(2,-1,0)$, $a_2>2$ and $S=\Sigma-2=a_1+a_2-2$. Decompositions of $\Sigma=12$ with two numbers are $3+9$ and $5+7$. We obtain
$$
(9,3,-1), (7,5,-1).
$$

Pretzel diagrams of type $4$ are either trivial or torus links with 2 strands, which have representative pretzel diagram of type $1$.  
\end{itemize}

In all diagrams of type $5$, $r=1$, $\lambda =0$ and $s>1$, so they have at least three entries, only the first one positive, none of them $\pm1$.

\begin{itemize}
\item If $P$ is of type 5.1 then $a_1\not= -a_2-1$ and $S=\Sigma- \min \{a_1, -a_2-1 \}$. We distinguish two possibilities:

If $a_1>-a_2-1$ then $S=\Sigma- (-a_2-1)$ hence $S-1=a_1-a_3-\dots$ The decompositions of $S-1=9$ with at least two numbers are $2+7$, $3+6$, $3+3+3$ and $4+5$. Assigning signs (recall that $r=1$) and taking into account the restrictions on $a_2$ we obtain 
$$
\begin{array}{l}
(3, -3 ,-6), (6, -3,-3), \\
(3, -2,-3,-3), \\
(3,-3,-3,-3), \\
(4, -3,-5), (5, -3 ,-4).
\end{array}
$$

If $a_1<-a_2-1$ then $S=\Sigma- a_1$, i.e., $S =-a_2-a_3-\dots$ The only decomposition of $S=10$ with at least two numbers and all of them greater than 3 (since $-a_2>a_1+1\geq 3$) is $5+5$. Assigning signs ($r=1$) and taking into account the restriction $2\leq a_1<-a_2-1$, we obtain
$$
(2 ,-5,-5), \quad ( 3, -5,-5).
$$

\item If $P$ is of type 5.2 then $a_2=-a_1-1$, $a_3\not= a_2-1 (=-a_1-2)$ and $S= \Sigma- \min \{a_1+1, -a_3-1\}$. Again we distinguish two possibilities:

If $a_3>-a_1-2$ then $S= \Sigma-(-a_3-1)$ hence $S-1=a_1-a_2-a_4-\dots$. 
The only decomposition of $S-1=9$ with at least two numbers, and two of them consecutive (since $-a_2=a_1+1$), is $4+5$. Assigning signs (recall that $r=1$) and having into account the restriction $a_2\geq a_3>a_2-1$, i.e., $a_3=a_2$, we obtain the only possibility
$$
(4,-5,-5).
$$

If $a_3<-a_1-2$ then $S= \Sigma-(a_1+1)=\Sigma+a_2$ hence $S =a_1-a_3-\dots$. 
The only decomposition of $S=10$ with at least two numbers, the difference between the two smallest ones greater than 2 (since $-a_3>a_1+2$), is $3+7$. Assigning signs and adding $a_2=-a_1-1$, we obtain 
$$
(3, -4, -7).
$$
 
\item If $P$ is of type 5.3 then $P=P(a_1, -a_1-1, -a_1-2)$. Exceptional cases in this item have two even entries, so they do not define knots. Since $10=S=2a_1$, the only possibility is
$$
(5, -6, -7).
$$

\item If $P$ is of type 5.4, $P=P(a_1, -a_1-1, -a_1-2, a_4, \dots)$ with $a_4<-a_1-3$. As before, exceptional cases do not define knots. Notice that $a_1$ must be odd, otherwise there would be two even entries. So $a_2$ is even. In this case, the theorem gives $S=\Sigma-a_1-3$, so $S+3=-a_2-a_3-a_4-\dots$. Thus we have to decompose $S+3=13$ as sum of at least 3 positive numbers, $a\leq b\leq c$ with $a>3$, $b=a+1$ and $c\geq a+2$. But this is not possible.
 
All diagrams of type $5.5$ have two even entries, so they are not in ${\cal L}_{10}$.

\item If $P$ is of type 5.6 then $P=P(a_1, -a_1-1, -a_1-2, -a_1-2, \dots )$ with span $S=\Sigma-a_1-1$. Thus $S+1=-a_2-a_3-a_4-\dots$ and we have to decompose $S+1 = 11$ as sum of at least 3 positive numbers $a, b, c$ with $a+1=b=c$, $a\geq 3$ and $b$ odd. But this is not possible.

Pretzel diagrams of types 6.1 and 6.2 are either trivial or torus links with two strands, which have representative pretzel diagrams of type 1.

\item If $P$ is of type 6.3 then $P=P(1, a_2, a_3)$ with $a_2, a_3$ negative, $a_2\not =-2$, and $S=\Sigma-2$. The decompositions of $S+2=12$ with two numbers greater than 2 are $3+9$ and $5+7$. We have diagrams 
$$
(1,-3,-9), \quad (1,-5,-7).
$$

\item If $P$ is of type  6.4 then $P=P(1, a_2, a_3, a_4, \dots)$ with $a_2<-2$ and $S=\Sigma-1$.  The only decomposition of $S+1=11$ with at least three numbers greater than 2 is $3+3+5$, which gives the diagram
$$
(1,-3,-3,-5). 
$$

Finally, since $P(1,-2,a_3, \dots, a_n)$ and $P(2,a_2, \dots, a_n)$ are isotopic by Remark 7.5, and $r=1$ and $\lambda =0$ for $P(2,a_3, \dots, a_n)$, any pretzel link represented by a diagram of type 7 has also a diagram of type 4 or type 5. It follows that type 7 diagrams are not necessary in order to fill a complete set ${\cal L}_{10}$. This completes the proof.
\end{itemize}
\end{proof}

\section{Appendix} \label{SectionAppendix}
Here we detail the proof of Theorem \ref{TheoremSpan}.

\begin{proof}
Let $P=P(a_1, \ldots ,a_n)$ and $\epsilon _i=(-1)^{a_i}$ for every $i\in \{ 1,\ldots , n\}$. 
Indeed, 
\begin{eqnarray}\label{relationJK}
\tn{span}_tV(L) = \frac{1}{4} \tn{span}_A \langle P\rangle.
\end{eqnarray}
 By Theorem~\ref{TheoremMain} and Lemma~\ref{lemma 0}
\begin{eqnarray*}
\delta ^n \langle P\rangle 
= \prod_{i=1}^{n} A^{-3a_i} \left(\prod_{i=1}^{n}(\epsilon _i + (\delta^2 - 1)A^{4a_{i}}) \right. \left.
+(\delta^2 - 1)\prod_{i=1}^{n}(\epsilon_i - A^{4a_{i}}) \right).
\end{eqnarray*}

Let $B=A^4$. Then $\delta ^2 - 1 = B^{-1}+1+B$ and
we have that
\begin{eqnarray}\label{relationB}
4n + \tn{span}_A \langle P\rangle
= \tn{span}_A (\delta ^n \langle P\rangle) 
= 4\,\tn{span}_B(p(B))
\end{eqnarray}
where
\begin{eqnarray}
p(B) = \prod_{i=1}^{n}
\left( \epsilon_i + (B^{-1} + 1 + B) B^{a_{i}} \right) + (B^{-1}+1+B) \prod_{i=1}^{n}(\epsilon_i - B^{a_{i}}).
\end{eqnarray}

Putting together \ref{relationJK} and \ref{relationB}, we deduce that
\begin{eqnarray}\label{relationVB}
\tn{span}_t V(L) = \tn{span}_B(p(B)) - n.
\end{eqnarray}
In the rest of the proof we fix our attention in the polynomial $p(B)$ and calculate its span in the variable $B$. Let $F(B) = \prod_{i=1}^{n}(\epsilon _i + (B^{-1} + 1 + B) B^{a_{i}})$ and $S(B) = (B^{-1}+1+B) \prod_{i=1}^{n}(\epsilon _i - B^{a_{i}})$ be the first and second summands of $p(B)$. Let $h_F$ and $l_F$ be respectively the highest and lowest degree of $F(B)$, and let
$h_S$ and $l_S$ be respectively the highest and lowest degree of $S(B)$. Let $h$ and $l$ be respectively the highest and lowest degree of $p(B)$. By definition $\tn{span}_B(p(B)) = h - l$, and clearly $h = \max \{h_F, h_S\}$ if $h_F \neq h_S$ and $l = \min\{l_F,l_S\}$ if $l_F \neq l_S$. The strategy will be then to calculate $h_F$, $l_F$, $h_S$ and $l_S$, and whenever $h_F = h_S$ look carefully at the possible cancellations of the highest degree terms of the
summands $F(B)$ and $S(B)$. Under the hypothesis of the theorem $l_F$ and $l_S$ will be found to be different.
Suppose that $z>0$. Then 
$p(B) = F(B)$ and 
\begin{eqnarray*}
\tn{span}_B(p(B))& = &\sum_{i=1}^{n}\tn{span}_B(\epsilon _i + (B^{-1}+1+B)B^{a_{i}}) \\
            &=& \sum_{|a_{i}|>1}(|a_{i}|+1) + \sum_{|a_{i}|=1}1
+\sum_{|a_{i}|=0}2 \\
            &=&\sum_{|a_{i}|>1}|a_{i}| + r + s + \alpha + \beta + 2z \\
            &=& \sum_{|a_{i}|>1}|a_{i}| + n + z
\end{eqnarray*}
and item {\it 1} follows.

Assume now that $z=0$. It is easy to see that
$$
\begin{array}{ll}
h_{F} = r+ \sum_{a_{l}>1}a_{l} +2\alpha -\beta, &
l_{F} = -s+\sum_{a_{j}<-1}a_{j}+\alpha -2\beta,\\ &\\
h_{S} = 1+\sum_{a_{l}>1}a_{l} +\alpha, &
l_{S} = -1+\sum_{a_{j}<-1}a_{j} -\beta,
\end{array}
$$
hence  
$$
h = \left\{ \begin{array}{lcc}
  h_F & \tn{if} & r+\lambda >1 \\ &&\\
  h_S & \tn{if} & r+\lambda <1 
\end{array}\right.,
\qquad 
l = \left\{ \begin{array}{lcc}
  l_F & \tn{if} & s-\lambda >1 \\ &&\\
  l_S & \tn{if} & s-\lambda <1
\end{array}\right.
$$
and item {\it 2} follows (if $r + \lambda < 1$ and $s - \lambda <1$ then $r=s=\lambda = 0$ and, since $P$ is reduced, $P=(1, -1)$ which represents the split union of two trivial knots whose Jones polynomial has span two).

We now prove item {\it 3.1}. Assume $r + \lambda =1$, $r>1$ and $(r, \lambda, s) \not= (2,-1,0)$. First note that $s - \lambda > 1$ hence $l = l_F$. By Remark \ref{obs}.\ref{obs1} we may assume that $a_l > 1$, $l =1,\ldots, r$ and $a_j<-1$, $j = r + 1, \ldots, r+s$. We have that $h_F = h_S$, 
$$
\begin{array}{rcl}
F(B) & = & \epsilon_{r+1}\ldots \epsilon_{r+s}B^{h_F} +(r+\alpha + \beta + d) \epsilon_{r+1}\ldots \epsilon_{r+s} B^{h_F-1} \\
&& + \tn{ monomials of degree $< h_F-1$}
\end{array}
$$
and
$$
\begin{array}{rcl}
S(B) & = & (-1)^r \epsilon_{r+1}\ldots \epsilon_{r+s} (-1)^{\alpha} (-1)^{\beta} B^{h_S} - (\alpha + \beta + 1) \epsilon_{r+1}\ldots \epsilon_{r+s} B^{h_S-1} \\
&& + \tn{ monomials of degree $< h_S-1$}
\end{array}
$$
where $d$ is the number of $a_k=-2$ (note that
$\epsilon_{k}=+1$ if $a_k=-2$). Since $r + \lambda~=~1$ the first summands cancel in $p(B)$. Since $r>1$, $(r-1+d) \epsilon_{r+1} \cdots \epsilon_{r+s} B^{h_F-1}$ is the highest degree term of $p(B)$. Hence $h =h_F - 1$ and {\it 3.1} follows.

The exceptional cases {\it 3.2} to {\it 3.5}, when $(r,\lambda, s) = (2,-1,0)$, will be proved later by using items {\it 6} and {\it 7}.

In the rest of the proof we will write $P\sim P'$ if both diagrams $P$ and $P'$ define the same link. 

We now prove item {\it 4.} Item {\it 4.1} corresponds to the trivial knot $P(a_1)$, with span~$1$. Since $P(a,b)\sim P(0, a+b)$, in the item {\it 4.3} we have also the trivial knot with span one, and for items {\it 4.2} and {\it 4.4} we may apply item {\it 1}.

Item {\it 5} is the difficult one, and we leave it for last. We now prove item {\it 6}:
\begin{itemize}
\item[{\it 6.1.}] It is $P(1)$, the trivial knot, with $S=1$. 
\item[{\it 6.2.}] Since $P(1,a_2) \sim P(0,1+a_2)$, then $S=1+|1+a_2| = 1-(1+a_2)=-a_2$ by item {\it 1} (note that $|1+a_2|>1$ since $a_2<-2$).
\item[{\it 6.3.}] The diagram is then $P(1,a_2,a_3)$. By expanding $p(B)$ we find that $\tn{span}_B(p(B))= h - l = 1- (a_2+a_3)$.
\item[{\it 6.4.}] It is the case $P(1, b_{1},\dots ,b_{s})$ with $s>2$ and $b_j<-2$, $j=1, \ldots ,s$. Then 
$$
\begin{array}{c}
p(B) = (B+B^2)\prod_{j=1}^{s} (B^{b_j-1} + B^{b_j}
+ B^{b_{j}+1} + \epsilon_j) \\
\qquad \qquad + (B^{-1}+1+B)(-1-B)\prod_{j=1}^{s}(-B^{b_{j}}
+ \epsilon_j)
\end{array}
$$
has span $s - b_1- \ldots -b_s$. 
\end{itemize}

Remark \ref{obs}.\ref{obs5} is used constantly along the proof of item {\it 7}. Items {\it 7.1} to {\it 7.4} are derived from items {\it 4.1} to {\it 4.4} respectively. Items {\it 7.5} to {\it 7.15} are derived from item {\it 5}.
\begin{itemize}
\item[{\it 7.1.}] $P(1,-2)\sim P(2)$, the trivial knot, $S=1$.
\item[{\it 7.2.}] $P(1,-2,-2)\sim P(2,-2)$. Then $S=2$ by item {\it 4.2}.
\item[{\it 7.3.}] $P(1,-2,-3)\sim P(2,-3)$. Then $S=1$ by item {\it 4.3}.
\item[{\it 7.4.}] $P(1,-2,a_3)\sim P(2,a_3)$ with $a_3<-3$. Then $S=1+|2+a_3| = 1-(2+a_3)=-1-a_3$ by item {\it 4.4}.
\item[{\it 7.5.}] $P(1,-2,-2,a_4)\sim P(2,-2,a_4)$ and we use item {\it 5.1}.
\item[{\it 7.6.}] $P(1,-2,-3,-3) \sim P(2,-3,-3)$ and we use item {\it 5.2}.
\item[{\it 7.7.}] $P(1,-2,-3,-4) \sim P(2,-3,-4)$ and we use the exceptional case of item {\it 5.3}.
\item[{\it 7.8.}] By item {\it 5.2.}
\item[{\it 7.9.}] By item {\it 5.1}.
\item[{\it 7.10.}] By item {\it 5.1.}
\item[{\it 7.11.}] By item {\it 5.2.}
\item[{\it 7.12.}] By item {\it 5.6.}
\item[{\it 7.13.}] By item {\it 5.5.}
\item[{\it 7.14.}] By item {\it 5.4.}
\item[{\it 7.15.}] By item {\it 5.2.}
\item[{\it 7.16.}] By item {\it 5.1.}
\end{itemize}

Exceptional cases of item {\it 3} correspond to pretzel diagrams $P(a_1, a_2, -1)$ with $a_1 \geq a_2 \geq 2$. After taking the mirror image and reordering the entries we obtain $P(1, b_2, b_3) = P(1,-a_2,-a_1)$ with $-2\geq -a_2 \geq -a_1$, hence $(r', \lambda', s') = (0, 1, 2)$.
\begin{itemize}
\item[{\it 3.2}] By item {\it 7.2} since if $a_2=2$ and $a_1=2$ then $b_2 = b_3 = -2$.
\item[{\it 3.3}] By item {\it 7.3} since if $a_2=2$ and $a_1=3$ then $b_2= -2$ and $b_3 = -3$.
\item[{\it 3.4}] By item {\it 7.4} since if $a_2=2$ and $a_1 >3$ then $b_2 = -2$ and $b_3 < -3$.
\item[{\it 3.5}] By item {\it 6.3} since if $a_2>2$ then $b_2\not= -2$.
\end{itemize}

Finally we concentrate in the difficult cases, item {\it 5.} Since $r + \lambda = 1$, $r = 1$ and $s > 1$ it follows that $s - \lambda > 1$ and
$l = l_F$. But as in item {\it 3.1}, $h_F = h_S$ and the terms with this degree cancel. Moreover, other previous terms cancel too. The point is how many steps we have to go down in order to find the first no cancellation. This require some laborious calculations, that are shown carefully afterwards (for the item {\it 5.4} the case $a_1=2$ must be considered separately). In this process more and more inner coefficients have to be considered, having the impression that the process has not an end. But it has!

Recall that $\epsilon_i = (-1)^{a_i}$ for every $i \in \{1, \dots, n\}$, $a_1 > 1$, $a_j < - 1$ if $j \in \{2, \dots, n\}$ and $|a_2| \leq \ldots \leq |a_n|$. Also $\tn{span}_tV(L)=\tn{span}_B(p(B)) - n$ where $p(B)=F(B)+S(B)$. It will be convenient to write both polynomials $F(B)$ and $S(B)$ in such a way that the degree of the monomials in each factor increases when going right:
$$
F(B) 
= (\epsilon_1 + B^{a_{1}-1} + B^{a_1} + B^{a_1+1}) \prod_{j=2}^{n}(B^{a_{j} - 1} + B^{a_j} + B^{a_j+1} + \epsilon _j)
$$
and
$$
\begin{array}{rcl}
S(B) & = & (B^{-1}+1+B)(\epsilon _1-B^{a_{1}})
\prod_{j=2}^{n}(-B^{a_{j}}+\epsilon _j) \\ &&\\
& = & (\epsilon _1 B^{-1} + \epsilon _1 + \epsilon_1 B - B^{a_1 - 1} - B^{a_1} - B^{a_1+1}) 
\prod_{j=2}^{n}(-B^{a_{j}}+\epsilon _j).
\end{array}
$$

The proof of item {\it 5.3} is direct: it is enough to expand $P(a_1, -a_1-1, -a_1-2)$ to find that $\tn{span}_B(p(B))=2a_1+3$ if $a_1\not=2$, and it is $6$ if $a_1=2$.

Under the hypothesis of item {\it 5} we have that $h_F- l_F = \sum_{|a_i|>1} |a_i| + n$, so it is enough to show that the value of $h$ is the following for each case of item {\it 5}:
\begin{itemize}
\item[{\it 5.1.}] $h = 
\left\{ \begin{array}{l}
h_F - a_1 = 1 \tn{ if $a_1 < |a_2| - 1$},\\
h_F - (|a_2|-1) = 2 + a_1 + a_2 \tn{ if $a_{1}>|a_{2}|-1$}.
\end{array} \right.$
\end{itemize}

$\bullet$ Suppose first that $a_1 < |a_2|-1$. If in $F$ we take a $B^{a_j+1}$ of one of the last $n-1$ factors, we obtain degree $<1$ since $(a_1+1)+(a_2+1) <1$. If in $S$ we take a $-B^{a_j}$ of one of the last $n-1$ factors, we obtain degree $<0$ since $(a_1+1) + a_2 <0$. Then 
$$
F = \epsilon_2\cdots \epsilon_n B^{a_1+1} + \epsilon_2\cdots \epsilon_nB^{a_1}
+ \epsilon_2\cdots \epsilon_nB^{a_1-1}
+ \tn{ monomials of degree $<1$}
$$
and
$$
\begin{array}{l}
S = -\epsilon_2\cdots \epsilon_n B^{a_1+1} - \epsilon_2\cdots \epsilon_nB^{a_1}
- \epsilon_2\cdots \epsilon_nB^{a_1-1} + \epsilon_1\cdots \epsilon_nB \\
\qquad + \tn{ monomials of degree $<1$} 
\end{array}
$$
and $F+S = \epsilon_1\cdots \epsilon_nB + \tn{ monomials of degree $<1$}$, hence $h=1=h_F-a_1$.

$\bullet$ Suppose that $a_1 > |a_2|-1$ and let $h_0 = 2 + a_1 + a_2 = h_F - (|a_2|-1)$. If in $S$ we take a $-B^{a_j}$ of one of the last  $n-1$ factors, we obtain degree $<h_0$ since $(a_1+1) + a_2 < h$. Then, noting that $h_0>1$, we have that
$$
\begin{array}{l}
F = \epsilon_2\cdots \epsilon_n B^{a_1+1} + \epsilon_2\cdots \epsilon_nB^{a_1} + \epsilon_2\cdots \epsilon_nB^{a_1-1} + (1+k)\epsilon_3\cdots \epsilon_nB^{a_1+1+a_2+1} \\ 
\qquad + \tn{ monomials of degree $< h_0$} 
\end{array}
$$
if $a_2 = a_3 = \ldots  = a_{k+2} > a_{k+3}$ (note that if $a_2=a_3$, $\epsilon_2=\epsilon_3$ hence $\epsilon_3\cdots \epsilon_n = \epsilon_2 \epsilon_4\cdots \epsilon_n$,  etc.) and  
$$
\begin{array}{l}
S = -\epsilon_2\cdots \epsilon_n B^{a_1+1} - \epsilon_2\cdots \epsilon_nB^{a_1}
- \epsilon_2\cdots \epsilon_nB^{a_1-1} \\
\qquad + \tn{ monomials of degree $< h_0$}. 
\end{array}
$$
It follows that $F+S = (1+k) \epsilon_3\cdots \epsilon_n B^{h_0}
+ \tn{ monomials of degree } < h_0$, hence $h = h_0 = 2+a_1+a_2$ and item {\it 5.1} is proved. 

\begin{itemize}
\item[{\it 5.2.}] $h = 
\left\{ \begin{array}{l}
h_F - |a_2| = 0  \tn{ if $a_1 = |a_2|-1$ and $|a_2|<|a_3|-1$}, \\
h_F - a_1 = 1 \tn{ if $a_1=|a_2|-1$ and $|a_2|=|a_3|$}.
\end{array} \right.$
\end{itemize}

$\bullet$ Suppose that $a_1 = |a_2|-1$ and $|a_2|<|a_3|-1$, hence $a_1+a_2 +1 = 0$ and $a_1+a_3+2<0$. Then $h = h_F - |a_2| = 1 +a_1 + a_2 = 0$. Then
$$
\begin{array}{l}
F = \epsilon_2\cdots \epsilon_n B^{a_1+1} + \epsilon_2\cdots \epsilon_nB^{a_1}
+ \epsilon_2\cdots \epsilon_nB^{a_1-1} + \color{red}{\epsilon_1\cdots \epsilon_n}\color{black}  \\
\qquad + \epsilon_3\cdots \epsilon_n B^{{a_1+1+a_2+1}\color{blue}{=1}\color{black}} + \epsilon_3\cdots \epsilon_n B^{a_1+a_2+1 \color{red}{=0}\color{black}} \\ 
\qquad + \epsilon_3\cdots \epsilon_n B^{a_1+1+a_2 \color{red}{=0}\color{black}} + \tn{ monomials of degree $< 0$} 
\end{array}
$$
and
$$
\begin{array}{l}
S = -\epsilon_2\cdots \epsilon_n B^{a_1+1} - \epsilon_2\cdots \epsilon_nB^{a_1}
- \epsilon_2\cdots \epsilon_nB^{a_1-1} + \color{blue}{\epsilon_1\cdots \epsilon_nB }\color{black} 
+ \color{red}{\epsilon_1\cdots \epsilon_n}\color{black} \\ 
\qquad + \epsilon_3\cdots \epsilon_nB^{a_1+1+a_2\color{red}{=0}\color{black}} + \tn{ monomials of degree $< 0$}. 
\end{array}
$$
Since $\epsilon_1\epsilon_2 = -1$ we have that $F+S = \epsilon_3\cdots \epsilon_n  + \tn{ monomials of degree $< 0$}$ and $h = 0 = 1 +a_1 + a_2 = h_F - |a_2|$.

$\bullet$ Suppose that $a_1 = |a_2|-1$ and $|a_2|=|a_3|$ hence $a_1+a_2 +1 = 0$. If in $F$ we take two $B^{a_j+1}$ of the last $n-1$ factors, we obtain degree $<1$ since $(a_1+1)+(a_2+1)+(a_3+1) = a_3 + 2 < 1$. If in $S$ we take a $-B^{a_j}$ of one of the last $n-1$ factors, we obtain degree $<1$ since $(a_1+1) + a_2 = 0$. Then
$$
\begin{array}{l}
F = \epsilon_2\cdots \epsilon_n B^{a_1+1} + \epsilon_2\cdots \epsilon_nB^{a_1}
+ \epsilon_2\cdots \epsilon_nB^{a_1-1}\\
\qquad + (2+k) \epsilon_3\cdots \epsilon_n B^{{a_1+1+a_2+1}\color{red}{=1}\color{black}} + \tn{ monomials of degree $<1$} 
\end{array}
$$
if $a_2 = a_3 = \ldots = a_{k+3} > a_{k+4}$ and
$$
\begin{array}{l}
S = -\epsilon_2\cdots \epsilon_n B^{a_1+1} - \epsilon_2\cdots \epsilon_nB^{a_1}
- \epsilon_2\cdots \epsilon_nB^{a_1-1} + \color{red}{\epsilon_1\cdots \epsilon_nB}\color{black} \\ 
\qquad + \tn{ monomials of degree $<1$}. 
\end{array}
$$
Since $\epsilon_1\epsilon_2=-1$ we have that $F+S = (1+k) \epsilon_3\cdots \epsilon_nB + \tn{ monomials of degree $< 1$}$ and $h = 1 = h_F - a_1$.


\begin{itemize}
\item[{\it 5.4.}] $h = 
\left\{ \begin{array}{l} 
h_F-(|a_3|+1) = - 2 \\
\qquad \tn{ if $a_1=|a_2|-1$, $|a_2|=|a_3|-1$, $|a_3|<|a_4|-1$ and $a_1 >2$}\\ 
\qquad \tn{ or $(a_1, a_2, a_3, a_4) = (2, -3, -4, -6)$},\\ 
-3 \tn{ if $(a_1, a_2, a_3, a_4) = (2,-3,-4,a_4)$ with $a_4<-6$.}
\end{array}\right.$
\end{itemize}

$\bullet$ Suppose that $a_1 = |a_2|-1$, $|a_2|=|a_3|-1$, $|a_3|<|a_4|-1$ and $a_1>2$, hence $a_1+ a_2 + 1 = 0$, $a_1+a_3+2=0$ and $a_1+a_4+3<0$. If in $F$ we take two $B^{a_j+1}$ of the last  $n-1$ factors we obtain degree $< -2$ since $(a_1+1)+(a_2+1)+(a_3+1) = -a_1 < -2$. If in $S$ we take two $-B^{a_j}$ of the last $n-1$ factors we obtain degree $<-2$ since $(a_1+1) + a_2 + a_3 = a_3 < -2$. Then
$$
\begin{array}{l}
F = \epsilon_2\cdots \epsilon_n B^{a_1+1} + \epsilon_2\cdots \epsilon_nB^{a_1}
+ \epsilon_2\cdots \epsilon_nB^{a_1-1} + \color{brown}{\epsilon_1\cdots \epsilon_n}\color{black}  \\
\qquad + \epsilon_3\cdots \epsilon_n B^{{a_1+1+a_2+1}\color{cyan}{=1}\color{black}} 
+ \epsilon_2\epsilon_4\cdots \epsilon_n B^{{a_1+1+a_3+1}\color{brown}{=0}\color{black}} \\
\qquad \qquad + (1+k) \epsilon_2\epsilon_3\epsilon_5\cdots \epsilon_n B^{{a_1+1+a_4+1}\color{red}{\leq -2}\color{black}} \\
\qquad + \epsilon_3\cdots \epsilon_n B^{a_1+a_2+1 \color{brown}{=0}\color{black}} 
+ \epsilon_2\epsilon_4\cdots \epsilon_n B^{a_1 + a_3 + 1 \color{blue}{=-1}\color{black}} \\ 
\qquad + \epsilon_3\cdots \epsilon_n B^{a_1 - 1 + a_2 + 1 \color{blue}{=-1}\color{black}} 
+ \epsilon_2\epsilon_4\cdots \epsilon_n B^{a_1 - 1 + a_3 +1\color{red}{=-2}\color{black}} \\
\qquad  + \epsilon_3\cdots \epsilon_n B^{a_1 + 1 + a_2 \color{brown}{=0}\color{black}} 
+ \epsilon_2\epsilon_4\cdots \epsilon_n B^{a_1 + 1 + a_3 \color{blue}{=-1}\color{black}} \\
\qquad + \epsilon_3\cdots \epsilon_n B^{a_1 + a_2 \color{blue}{=-1}\color{black}} 
+ \epsilon_2\epsilon_4\cdots \epsilon_n B^{a_1+a_3 \color{red}{=-2}\color{black}} \\
\qquad + \epsilon_3\cdots \epsilon_n B^{a_1-1+a_2 \color{red}{=-2}\color{black}} \\
\qquad + \epsilon_3\cdots \epsilon_n B^{a_1 + 1 + a_2 -1 \color{blue}{=-1}\color{black}} 
+ \epsilon_2\epsilon_4\cdots \epsilon_n B^{a_1 + 1 + a_3 -1 \color{red}{=-2}\color{black}} \\
\qquad + \epsilon_3\cdots \epsilon_n B^{a_1+a_2-1 \color{red}{=-2}\color{black}} \\
\qquad + \tn{ monomials of degree $<-2$}
\end{array}
$$
if $a_4 = a_5 = \ldots = a_{k+4} > a_{k+5}$ and
$$
\begin{array}{l}
S = -\epsilon_2\cdots \epsilon_n B^{a_1+1} - \epsilon_2\cdots \epsilon_nB^{a_1}
- \epsilon_2\cdots \epsilon_nB^{a_1-1} + \color{cyan}{\epsilon_1\cdots \epsilon_n}B  \\
\qquad + \color{brown}{\epsilon_1\cdots \epsilon_n}\color{black}
+ \color{blue}{\epsilon_1\cdots \epsilon_nB^{-1}}\color{black} \\ 
\qquad + \epsilon_3\cdots \epsilon_nB^{a_1+1+a_2\color{brown}{=0}\color{black}} 
+ \epsilon_2\epsilon_4\cdots \epsilon_nB^{a_1+1+a_3\color{blue}{=-1}\color{black}} \\
\qquad + \epsilon_3\cdots \epsilon_nB^{a_1+a_2\color{blue}{=-1}\color{black}} 
+ \epsilon_2\epsilon_4\cdots \epsilon_nB^{a_1+a_3\color{red}{=-2}\color{black}} \\
\qquad + \epsilon_3\cdots \epsilon_nB^{a_1-1+a_2\color{red}{=-2}\color{black}} \\
\qquad + \tn{ monomials of degree $<-2$}. 
\end{array}
$$
Since $\epsilon_1 = -\epsilon_2 = \epsilon_3$ all terms of degree greater than $-2$ cancel. For degree $-2$, the two addends provided by $S$ cancel, and the last four addends provided by $F$ cancel. Then if $a_1+1+a_4+1 < -2$ we have $F+S=\epsilon_2\epsilon_4\cdots \epsilon_n B^{-2}$; otherwise $a_1+1+a_4+1 = -2$ hence $\epsilon_3 = \epsilon_1=\epsilon_4$ and it follows that $F+S = (2+k)\epsilon_2\epsilon_3\epsilon_5\cdots \epsilon_n B^{-2}$. In both cases $h=-2=h_F-(|a_3|+1)$.

$\bullet$ Suppose that $(a_1, a_2, a_3, a_4) = (2, -3, -4, -6)$ hence $a_1+ a_2 + 1 = 0$, $a_1+a_3+2=0$, $a_1+a_4+4 = 0$, $\epsilon_1 = \epsilon_3 = 1$ and $\epsilon_2 = -1$. If in $F$ we take two $B^{a_j+1}$ other than $B^{a_2+1}$ and $B^{a_3+1}$ among the last $n-1$ factors, we obtain degree $<-2$ since $(a_1+1)+(a_2+1)+(a_4+1)=-4$ while $(a_1+1)+(a_2+1)+(a_3+1)=-2$. 
Instead, if in $S$ we take two $-B^{a_j}$ of the last $n-1$ factors we obtain degree $<-2$ since $(a_1+1) + a_2 + a_3 = -4 < -2$. Then
$$
\begin{array}{l}
F = \epsilon_2\cdots \epsilon_n B^{a_1+1} + \epsilon_2\cdots \epsilon_nB^{a_1}
+ \epsilon_2\cdots \epsilon_nB^{a_1-1} + \color{brown}
{\epsilon_1\cdots \epsilon_n}\color{black}  \\
\qquad + \epsilon_3\cdots \epsilon_n B^{{a_1+1+a_2+1}\color{cyan}{=1}\color{black}} 
+ \epsilon_2\epsilon_4\cdots \epsilon_n B^{{a_1+1+a_3+1}\color{brown}{=0}\color{black}} \\
\qquad \qquad  
+ (1+k) \epsilon_2\epsilon_3\epsilon_5\cdots \epsilon_n B^{{a_1+1+a_4+1}\color{red}{= -2}\color{black}} \\
\qquad  + \epsilon_3\cdots \epsilon_n B^{a_1+a_2+1 \color{brown}{=0}\color{black}} 
+ \epsilon_2\epsilon_4\cdots \epsilon_n B^{a_1 + a_3 + 1 \color{blue}{=-1}\color{black}} \\ 
\qquad  + \epsilon_3\cdots \epsilon_n B^{a_1 - 1 + a_2 + 1 \color{blue}{=-1}\color{black}} 
+ \epsilon_2\epsilon_4\cdots \epsilon_n B^{a_1 - 1 + a_3 +1\color{red}{=-2}\color{black}} \\
\qquad  + \epsilon_1 \epsilon_3\cdots \epsilon_n B^{0 + a_2 + 1 \color{red}{=-2}\color{black}} \\
\qquad  + \epsilon_3\cdots \epsilon_n B^{a_1 + 1 + a_2 \color{brown}{=0}\color{black}} 
+ \epsilon_2\epsilon_4\cdots \epsilon_n B^{a_1 + 1 + a_3 \color{blue}{=-1}\color{black}} \\
\qquad  + \epsilon_3\cdots \epsilon_n B^{a_1 + a_2 \color{blue}{=-1}\color{black}} 
+ \epsilon_2\epsilon_4\cdots \epsilon_n B^{a_1+a_3 \color{red}{=-2}\color{black}} \\
\qquad  + \epsilon_3\cdots \epsilon_n B^{a_1-1+a_2 \color{red}{=-2}\color{black}} \\ 
\qquad  + \epsilon_3\cdots \epsilon_n B^{a_1 + 1 + a_2 -1 \color{blue}{=-1}\color{black}} 
+ \epsilon_2\epsilon_4\cdots \epsilon_n B^{a_1 + 1 + a_3 -1 \color{red}{=-2}\color{black}} \\
\qquad  + \epsilon_3\cdots \epsilon_n B^{a_1+a_2-1 \color{red}{=-2}\color{black}} \\
\qquad  + \epsilon_4\cdots \epsilon_n B^{a_1+1+a_2+1 +a_3+1\color{red}{=-2}\color{black}} \\
\qquad  + \tn{ monomials of degree $<-2$}
\end{array}
$$
if $a_4 = a_5 = \ldots = a_{k+4} > a_{k+5}$, and 
$$
\begin{array}{l}
S = -\epsilon_2\cdots \epsilon_n B^{a_1+1} - \epsilon_2\cdots \epsilon_nB^{a_1}
- \epsilon_2\cdots \epsilon_nB^{a_1-1} 
+ \color{cyan}{\epsilon_1\cdots \epsilon_n}B  \\
\qquad \qquad + \color{brown}{\epsilon_1\cdots \epsilon_n}\color{black}
+ \color{blue}{\epsilon_1\cdots \epsilon_nB^{-1}}\color{black}\\ 
\qquad + \epsilon_3\cdots \epsilon_n B^{a_1+1+a_2\color{brown}{=0}\color{black}} 
+ \epsilon_2\epsilon_4\cdots \epsilon_n B^{a_1+1+a_3 \color{blue}{=-1}\color{black}} \\
\qquad + \epsilon_3\cdots \epsilon_nB^{a_1+a_2\color{blue}{=-1} \color{black}} 
+ \epsilon_2\epsilon_4\cdots \epsilon_nB^{a_1+a_3\color{red}{=-2}\color{black}} \\
\qquad + \epsilon_3\cdots \epsilon_nB^{a_1-1+a_2\color{red}{=-2}\color{black}} \\
\qquad - \epsilon_1\epsilon_3\cdots \epsilon_nB^{1+a_2\color{red}{=-2}\color{black}} \\
\qquad + \tn{ monomials of degree $<-2$}. 
\end{array}
$$
Monomials of degree greater that $-2$ cancel, and it turns out that   $F+S = (1+k) \epsilon_2\epsilon_3\epsilon_5\cdots \epsilon_n B^{-2}$ hence 
$h = -2 =h_F-(|a_3|+1)$.
 
$\bullet$ If $(a_1, a_2, a_3, a_4)=(2,-3,-4,a_4)$ with $a_4 < -6$ then  $a_1+a_2+1=0$, $a_1+a_3+2=0$, $a_1+a_4 < - 4$,  
$\epsilon_1 = \epsilon_3 = 1$ and $\epsilon_2 = -1$. In $F$ there are four addends of degree $\geq -3$ formed with two terms of non-extremal degree among the last $n-1$ factors, since $(a_1+1)+(a_2+1)+(a_3+1) = -2$ and
$$
a_1+(a_2+1)+(a_3+1) = 
(a_1+1)+a_2+(a_3+1) = 
(a_1+1)+(a_2+1)+a_3 = -3
$$
but $(a_1+1) + (a_2+1) + (a_4+1) = a_4 + 2 < -6 + 2 = - 4$.
Instead, if in $S$ we take two $-B^{a_j}$ of the last $n-1$ factors we obtain degree $<-3$ since $(a_1+1) + a_2 + a_3 = -4 < -3$. Then
$$
\begin{array}{l}
F = \epsilon_2\cdots \epsilon_n B^{a_1+1} + \epsilon_2\cdots \epsilon_nB^{a_1}
+ \epsilon_2\cdots \epsilon_nB^{a_1-1} + \color{cyan}{\epsilon_1\cdots \epsilon_n}\color{black}  \\
\qquad + \epsilon_3\cdots \epsilon_n B^{{a_1+1+a_2+1}\color{green}{=1}\color{black}} 
+ \epsilon_2\epsilon_4\cdots \epsilon_n B^{{a_1+1+a_3+1}\color{cyan}{=0}\color{black}} \\
\qquad \qquad + (1+k) \epsilon_2\epsilon_3\epsilon_5\cdots \epsilon_n B^{{a_1+1+a_4+1}\color{red}{\leq -3}\color{black}} \\
\qquad  + \epsilon_3\cdots \epsilon_n B^{a_1+a_2+1 \color{cyan}{=0}\color{black}} 
+ \epsilon_2\epsilon_4\cdots \epsilon_n B^{a_1 + a_3 + 1 \color{brown}{=-1}\color{black}} \\ 
\qquad  + \epsilon_3\cdots \epsilon_n B^{a_1 - 1 + a_2 + 1 \color{brown}{=-1}\color{black}} 
+ \epsilon_2\epsilon_4\cdots \epsilon_n B^{a_1 - 1 + a_3 +1\color{blue}{=-2}\color{black}} \\
\qquad  + \epsilon_1 \epsilon_3\cdots \epsilon_n B^{0 + a_2 + 1 \color{blue}{=-2}\color{black}} 
+ \epsilon_1 \epsilon_2\epsilon_4\cdots \epsilon_n B^{0 + a_3 +1\color{red}{=-3}\color{black}} \\
\qquad  + \epsilon_3\cdots \epsilon_n B^{a_1 + 1 + a_2 \color{cyan}{=0}\color{black}} 
+ \epsilon_2\epsilon_4\cdots \epsilon_n B^{a_1 + 1 + a_3 \color{brown}{=-1}\color{black}} \\
\qquad  + \epsilon_3\cdots \epsilon_n B^{a_1 + a_2 \color{brown}{=-1}\color{black}} 
+ \epsilon_2\epsilon_4\cdots \epsilon_n B^{a_1+a_3 \color{blue}{=-2}\color{black}} \\
\qquad  + \epsilon_3\cdots \epsilon_n B^{a_1-1+a_2 \color{blue}{=-2}\color{black}} 
+ \epsilon_2\epsilon_4\cdots \epsilon_n B^{a_1-1+a_3 \color{red}{=-3}\color{black}}\\
\qquad  + \epsilon_1 \epsilon_3\cdots \epsilon_n B^{0 + a_2 \color{red}{=-3}\color{black}}\\ 
\qquad  + \epsilon_3\cdots \epsilon_n B^{a_1 + 1 + a_2 -1 \color{brown}{=-1}\color{black}} 
+ \epsilon_2\epsilon_4\cdots \epsilon_n B^{a_1 + 1 + a_3 -1 \color{blue}{=-2}\color{black}} \\
\qquad  + \epsilon_3\cdots \epsilon_n B^{a_1+a_2-1 \color{blue}{=-2}\color{black}} 
+ \epsilon_2\epsilon_4\cdots \epsilon_n B^{a_1+a_3-1 \color{red}{=-3}\color{black}}\\
\qquad  + \epsilon_3\cdots \epsilon_n B^{a_1-1+a_2-1 \color{red}{=-3}\color{black}} \\
\qquad  + \epsilon_4\cdots \epsilon_n B^{a_1+1+a_2+1 +a_3+1\color{blue}{=-2}\color{black}} \\
\qquad + \epsilon_4\cdots \epsilon_n B^{a_1+a_2+1 +a_3+1\color{red}{=-3}\color{black}} \\
\qquad + \epsilon_4\cdots \epsilon_n B^{a_1+1+a_2+a_3+1\color{red}{=-3}\color{black}} 
+ \epsilon_4\cdots \epsilon_n B^{a_1+1+a_2+1 +a_3\color{red}{=-3}\color{black}} \\
\qquad + \tn{ monomials of degree $<-3$}
\end{array}
$$
if $a_4 = a_5 = \ldots = a_{k+4} > a_{k+5}$, and
$$
\begin{array}{l}
S = -\epsilon_2\cdots \epsilon_n B^{a_1+1} - \epsilon_2\cdots \epsilon_nB^{a_1}
- \epsilon_2\cdots \epsilon_nB^{a_1-1} 
+ \color{green}{\epsilon_1\cdots \epsilon_n}B \\ 
\qquad + \color{cyan}{\epsilon_1\cdots \epsilon_n}\color{black}
+ \color{brown}{\epsilon_1\cdots \epsilon_nB^{-1}}\color{black}\\ 
\qquad  + \epsilon_3\cdots \epsilon_n B^{a_1+1+a_2\color{cyan}{=0}\color{black}} 
+ \epsilon_2\epsilon_4\cdots \epsilon_n B^{a_1+1+a_3 \color{brown}{=-1}\color{black}} \\
\qquad  + \epsilon_3\cdots \epsilon_nB^{a_1+a_2\color{brown}{=-1} \color{black}} 
+ \epsilon_2\epsilon_4\cdots \epsilon_nB^{a_1+a_3\color{blue}{=-2}\color{black}} \\
\qquad + \epsilon_3\cdots \epsilon_nB^{a_1-1+a_2\color{blue}{=-2}\color{black}} 
+ \epsilon_2 \epsilon_4 \cdots \epsilon_n B^{a_1-1+a_3\color{red}{=-3}\color{black}}\\
\qquad - \epsilon_1\epsilon_3\cdots \epsilon_nB^{1+a_2\color{blue}{=-2}\color{black}} 
- \epsilon_1\epsilon_2 \epsilon_4 \cdots \epsilon_n B^{1+a_3\color{red}{=-3}\color{black}}\\
\qquad - \epsilon_1\epsilon_3\cdots \epsilon_nB^{0+a_2\color{red}{=-3}\color{black}} \\
\qquad + \tn{ monomials of degree $<-3$}. 
\end{array}
$$
Note that the monomials of $S$ in the fourth and fifth rows cancel, what corresponds for $a_1=2$) to the fact that the first factor in $S$ can be simplified, being $S = (\epsilon_1B^{-1} + \epsilon_1 - B^{a_1} - B^{a_1+1}) (-B^{a_2} + \epsilon_2) \cdots (-B^{a_n} + \epsilon_n)$.

Since $\epsilon_1 = \epsilon_3 = 1$ and $\epsilon_2 = -1$, the  monomials of degree greater than $-3$ cancel. Let us see the degree $-3$. If $a_1+1+a_4+1<-3$ it turns out that $F+S = \epsilon_4 \ldots \epsilon_nB^{-3}$; otherwise $a_1+1+a_4+1=-3$, equivalently $a_4=-7$ hence $\epsilon_4=-1$, and
$$
F+S 
= (1+k) \epsilon_2 \epsilon_3 \epsilon_5\cdots \epsilon_n B^{-3} + 
\epsilon_4\cdots \epsilon_n B^{-3}
= (2+k) \epsilon_4\cdots \epsilon_n B^{-3}
$$
since $\epsilon_2\epsilon_3=-1=\epsilon_4$. In both cases $h=-3$ since $k$ is a non-negative integer. 

This completes the proof of the formula in item {\it 5.4}. For example, the span of the Jones polynomial of the pretzel link represented by the pretzel diagram $P(2,-3,-4,-7)$ is $(2 +  3+ 4 + 7) - 6 = 10$. Note that, by applying the formula $\sum_{|a_i|>1} |a_i| - a_1 -3$ of item {\it 5.4} it would be $11$. This amendments the case (iv)(c) in Theorem 2 of \cite{Pedro}. 

\begin{itemize}
\item[{\it 5.5.}] $h = h_F - |a_3| = -1$ if $a_1 = |a_2|-1$, $|a_2| = |a_3|-1$ and $|a_3| = |a_4|-1$, and
\end{itemize}
$\bullet$ Suppose that $a_1 = |a_2|-1$, $|a_2|=|a_3|-1$ and $|a_3|=|a_4|-1$ hence $a_1+a_2 + 1=0$, $a_1+a_3+2=0$ and $a_1+a_4+3=0$. If in $F$ we take two $B^{a_j+1}$ of the last $n-1$ factors, we obtain degree $< -1$ since  $(a_1+1)+(a_2+1)+(a_3+1) = a_3 + 2 = - a_1 < -1$. If in $S$ we take two $-B^{a_j}$ of the last $n-1$ factors, we obtain degree $<-1$ since $(a_1+1) + a_2 + a_3 = a_3 < -1$. Then
$$
\begin{array}{l}
F = \epsilon_2\cdots \epsilon_n B^{a_1+1} + \epsilon_2\cdots \epsilon_nB^{a_1}
+ \epsilon_2\cdots \epsilon_nB^{a_1-1} + \color{blue}{\epsilon_1\cdots \epsilon_n}\color{black}  \\
\qquad + \epsilon_3\cdots \epsilon_n B^{{a_1+1+a_2+1}\color{brown}{=1}\color{black}} 
+ \epsilon_2\epsilon_4\cdots \epsilon_n B^{{a_1+1+a_3+1}\color{blue}{=0}\color{black}} \\
\qquad \qquad + (1+k) \epsilon_2\epsilon_3\epsilon_5\cdots \epsilon_n B^{{a_1+1+a_4+1}\color{red}{=-1}\color{black}} \\
\qquad  + \epsilon_3\cdots \epsilon_n B^{a_1+a_2+1 \color{blue}{=0}\color{black}} 
+ \epsilon_2\epsilon_4\cdots \epsilon_n B^{a_1 + a_3 + 1 \color{red}{=-1}\color{black}} \\ 
\qquad  + \epsilon_3\cdots \epsilon_n B^{a_1 - 1 + a_2 + 1 \color{red}{=-1}\color{black}} \\
\qquad  + \epsilon_3\cdots \epsilon_n B^{a_1 + 1 + a_2 \color{blue}{=0}\color{black}} 
+ \epsilon_2\epsilon_4\cdots \epsilon_n B^{a_1 + 1 + a_3 \color{red}{=-1}\color{black}} \\
\qquad  + \epsilon_3\cdots \epsilon_n B^{a_1 + a_2 \color{red}{=-1}\color{black}} \\
\qquad  + \epsilon_3\cdots \epsilon_n B^{a_1 + 1 + a_2 -1 \color{red}{=-1}\color{black}} 
+ \tn{ monomials of degree $<-1$} 
\end{array}
$$
if $a_4 = a_5 = \ldots = a_{k+4} > a_{k+5}$, and
$$
\begin{array}{l}
S = -\epsilon_2\cdots \epsilon_n B^{a_1+1} - \epsilon_2\cdots \epsilon_nB^{a_1}
- \epsilon_2\cdots \epsilon_nB^{a_1-1} \\
\qquad + \color{brown}{\epsilon_1\cdots \epsilon_n}B  + \color{blue}{\epsilon_1\cdots \epsilon_n}\color{black} + \color{red}{\epsilon_1\cdots \epsilon_nB^{-1}}\color{black}\\ 
\qquad + \epsilon_3\cdots \epsilon_nB^{a_1+1+a_2\color{blue}{=0}\color{black}} 
+ \epsilon_2\epsilon_4\cdots \epsilon_nB^{a_1+1+a_3\color{red}{=-1}\color{black}} \\
\qquad + \epsilon_3\cdots \epsilon_nB^{a_1+a_2\color{red}{=-1}\color{black}} 
+ \tn{ monomials of degree $<-1$}. 
\end{array}
$$
Since $\epsilon_1 = -\epsilon_2 = \epsilon_3$ there is no terms of degree greater that $-1$ and 
$F+S = (1 + k) \epsilon_2\epsilon_3\epsilon_5\cdots \epsilon_n B^{-1} 
+ \tn{ monomials of degree $<-1$}$, with $k \geq 0$. Hence 
$h = -1 = h_F - |a_3|$.

\begin{itemize}
\item[{\it 5.6.}] $h = h_F - |a_2| = 0$ if $a_1 = |a_2| - 1$, $|a_2| = |a_3| - 1$ and $|a_3| = |a_4|$.
\end{itemize}
$\bullet$ Suppose that $a_1 = |a_2|-1$, $|a_2|=|a_3|-1$ and $|a_3|=|a_4|$ hence $a_1+a_2+1 = 0$, $a_1+a_3+2=0$ and $a_1+a_4+2=0$. 
If in $F$ we take two $B^{a_j+1}$ of the last $n-1$ factors, we obtain degree $<0$ since $(a_1+1)+(a_2+1)+(a_3+1) = a_3 + 2 = -a_1 \leq -2$. If in $S$ we take two $-B^{a_j}$ of the last $n-1$ factors, we obtain degree $<0$ since $(a_1+1) + a_2 + a_3 = a_3 < 0$. Then
$$
\begin{array}{l}
F = \epsilon_2\cdots \epsilon_n B^{a_1+1} + \epsilon_2\cdots \epsilon_nB^{a_1}
+ \epsilon_2\cdots \epsilon_nB^{a_1-1} + \color{red}{\epsilon_1\cdots \epsilon_n}\color{black}  \\
\qquad + \epsilon_3\cdots \epsilon_n B^{{a_1+1+a_2+1}\color{blue}{=1}\color{black}} 
+ \epsilon_2\epsilon_4\cdots \epsilon_n B^{{a_1+1+a_3+1}\color{red}{=0}\color{black}} \\
\qquad \qquad + (1+k) \epsilon_2\epsilon_3\epsilon_5\cdots \epsilon_n B^{{a_1+1+a_4+1}\color{red}{=0}\color{black}} \\
\qquad  + \epsilon_3\cdots \epsilon_n B^{a_1+a_2+1 \color{red}{=0}\color{black}} \\
\qquad  + \epsilon_3\cdots \epsilon_n B^{a_1 + 1 + a_2 \color{red}{=0}\color{black}} + \tn{ monomials of degree $<0$}
\end{array}
$$
if $a_4 = a_5 = \ldots = a_{k+4} > a_{k+5}$, and
$$
\begin{array}{l}
S = -\epsilon_2\cdots \epsilon_n B^{a_1+1} - \epsilon_2\cdots \epsilon_nB^{a_1}
- \epsilon_2\cdots \epsilon_nB^{a_1-1} + \color{blue}{\epsilon_1\cdots \epsilon_n}B  + \color{red}{\epsilon_1\cdots \epsilon_n}\color{black}\\ 
\qquad + \epsilon_3\cdots \epsilon_nB^{a_1+1+a_2\color{red}{=0}\color{black}} + \tn{ monomials of degree $<0$}. 
\end{array}
$$
It follows that $F+S = (1 + k)\epsilon_2\epsilon_3\epsilon_5\cdots \epsilon_n + \tn{ monomials of degree $<0$}$ since $\epsilon_1= -\epsilon_2 = \epsilon_3$. In particular $h = 0 = h_F - |a_2|$. This proves item {\it 5.6} and completes the proof of Theorem \ref{TheoremSpan}.
\end{proof}


\begin{center}
\begin{tabular}{c}
Raquel D\'{\i}az\\
Department of Algebra, Geometry and Topology \\
Facultad de Matem\'aticas, Universidad Complutense de Madrid \\
 {\it radiaz@ucm.es} 
\end{tabular}

\begin{tabular}{c}
Pedro M. G. Manch\'on\\
Department of Applied Mathematics to Industrial Engineering \\
ETSIDI, Universidad Polit\'ecnica de Madrid \\
{\it pedro.gmanchon@upm.es} \\
\end{tabular}
\end{center}

\end{document}